\newcommand\myeq{\stackrel{\mathclap{\normalfont\mbox{def}}}{=}}
\numberwithin{equation}{section}
\newtheorem{dfn}{Definition}[section]
\newtheorem{thm}[dfn]{Theorem}
\newtheorem{lma}[dfn]{Lemma}
\newtheorem{hypo}[dfn]{Hypothesis}
\newtheorem{ppsn}[dfn]{Proposition}
\newtheorem{crlre}[dfn]{Corollary}
\newtheorem{xmpl}[dfn]{Example}
\newtheorem{rmrk}[dfn]{Remark}
\newtheorem{prob}{Problem}
\DeclarePairedDelimiterX{\norm}[1]{\lVert}{\rVert}{#1}
\DeclarePairedDelimiterX{\bnorm}[1]{\big\lVert}{\big\rVert}{#1}
\DeclarePairedDelimiterX{\Bnorm}[1]{\Big\lVert}{\Big\rVert}{#1}
\newcommand{\T}{\mathbb{T}}
\newcommand{\N}{\mathbb{N}}
\newcommand{\D}{\mathbb{D}}
\newcommand{\C}{\mathbb{C}}		
\newcommand{\fcl}{\mathcal{F}}
\newcommand{\mcl}{\mathcal{M}}
\newcommand{\wcl}{\mathcal{W}}
\newcommand{\vcl}{\mathcal{V}}
\newcommand{\hile}{\mathcal{E}}
\newcommand{\kcl}{\mathcal{K}}
\newcommand{\hdcm}{{H}^2_{\mathbb{C}^m}(\mathbb{D})}
\newcommand{\hdcr}{{H}^2_{\mathbb{C}^r}(\mathbb{D})}
\newcommand{\hdcc}{{H}^2_{\mathbb{C}}(\mathbb{D})}
\newcommand{\hdcrp}{H^2(\mathbb{D},\mathbb{C}^{r\prime})}
\newcommand{\smdt}{\mathbb{E}_{H_U}(s)}
\newcommand{\kmdt}{\mathbb{E}_{K_U}(s)}
\newcommand{\bmoa}{\text{BMOA}}
\newcommand{\bmo}{\text{BMO}}
\newcommand{\smdts}{\mathbb{E}_{H_u}(s)}
\newcommand{\kmdts}{\mathbb{E}_{K_u}(s)}
\begin{document}

	\title[Schmidt subspaces of block Hankel operators]{ Schmidt subspaces of block Hankel operators}

\author[Chattopadhyay] {Arup Chattopadhyay}
\address{Department of Mathematics, Indian Institute of Technology Guwahati, Guwahati, 781039, India}
\email{arupchatt@iitg.ac.in, 2003arupchattopadhyay@gmail.com}

\author[Das]{Soma Das}
\address{Theoretical Statistics and Mathematics Unit, Indian Statistical Institute, Bangalore Centre, Bengaluru, 560059, India}
\email{dsoma994@gmail.com}

\author[Pradhan]{Chandan Pradhan}
\address{Department of Mathematics, Indian Institute of Science Bangalore,  Bengaluru, 560012, India}
\email{chandanp@iisc.ac.in, chandan.pradhan2108@gmail.com}

	\subjclass[2010]{47B35, 30H10}
	
	\keywords{Hankel operators, Schmidt subspace, Hardy space, nearly invariant subspace, model space}

	\begin{abstract} In scalar-valued Hardy space, the class of Schmidt subspaces for a bounded Hankel operator are closely related to nearly $S^*$-invariant subspaces, as described by G\'{e}rard and Pushnitski. In this article, we prove that these subspaces in the context of vector-valued Hardy spaces are nearly $S^*$-invariant with finite defect in general. As a consequence, we obtain a short proof of the characterization results concerning the Schmidt subspaces in scalar-valued Hardy space in an alternative way. Thus, our work complements the work of G\'{e}rard and Pushnitski regarding the structure of Schmidt subspaces.
	\end{abstract}
	\maketitle
	
	\section{Introduction}\label{intro}
In the theory of operators on analytic function space, one of the significant classes of operators is the Hankel operator, as they have connections with many branches of mathematics, for example, function theory, harmonic analysis, approximation theory, moment problems, spectral theory, orthogonal polynomials, stationary Gaussian processes, etc. Hankel operators have different realizations because such a variety of realizations is essential in application. Depending on the need of the problem under consideration, one can choose a suitable realization concretely. For example, if $\Gamma =\{\gamma_{j+k}\}_{j,k=0}^\infty$ is a bounded Hankel matrix on $\ell^2(\mathbb{Z}_+)$ then one can consider $\Gamma$ as a bounded linear operator on the classical Hardy space $H^2(\T)$ using natural identification between $\ell^2(\mathbb{Z}_+)$ and $H^2(\T)$, noting that this is a linear realization. It is worth mentioning that Peller's book \cite{Peller} is a well-known and accepted reference to the classical theory of Hankel operators and their various applications. We denote the unit circle by $\T := \{z\in \C:|{z}|=1\}$ and the open unit disk by $\D :=\{z\in \C : |z|<1\}$. Let $$\hdcc := \{f=\sum_{n=0}^{\infty}a_nz^n :\sum_{n=0}^{\infty}|a_n|^2<\infty \}$$ denotes the Hardy space of analytic functions on $\D$ equipped with the norm $\norm{f}:=\left(\sum\limits_{n=0}^{\infty}|a_n|^2\right)^{1/2}$ and $L^2(\T)$ denotes the Hilbert space of square-integrable functions with respect to the normalized Lebesgue measure on $\T$. It is well-known that one can identify the closed  subspace $H^2(\T)$ of $L^2(\T)$  consisting of all functions having vanishing negative Fourier coefficients with $\hdcc$ via the radial limit. In short, we will use $H^2$ to denote the Hardy space. We refer \cite{Nikolski} for more on the Hardy space theory.

Let $P:L^2(\T) \to H^2(\T)$ be the orthogonal projection (the $Szeg\ddot{o}$ projection), then corresponding to a $u\in \bmoa(\T)$ (see \eqref{bmoc}), the anti-linear Hankel operator $H_{u}$ is defined by
\begin{equation}\label{def1}
	H_{u}(f)=P(u\bar{f}),~~ f\in H^2(\T).
\end{equation}

The symbol $u\in \text{BMOA}(\T)$ ensures that the Hankel operator $H_{u}$ is bounded, which follows from the Nehari-Fefferman theorem \cite[ Section 1.1]{Peller}. Such Hankel operator $H_{u}$ is the anti-linear realization of the Hankel matrix $$\{\hat{u}(n+m)\}_{n,m\geq0},$$
where $\hat{u}(.)$ are the Fourier coefficients of ${u}$. 
It is easy to check that the kernel of a  Hankel operator $\Gamma=H_u$ is a shift invariant subspace (see, e.g., \cite[Section 1.1]{Peller}), and hence, due to the Beurling's theorem \cite{Beurling}, it is of the form $\theta H^2$, for some inner function $\theta$. In \cite{GaPu20}, G\'{e}rard and Pushnitski raised the following question in scalar-valued Hardy space.
\begin{prob}\label{p1}
	How can one characterize eigenspaces $\ker(\Gamma ^* \Gamma - s^2I), s>0$, as a class of subspaces in the Hardy space $\hdcc$ ? 
\end{prob}
In \cite{GaPu20}, authors solved the above-mentioned problem by showing that every such subspace (known as the \textit{Schmidt} subspaces for the Hankel operator) can be identified with a subspace of the form $p\kcl_{z\theta}$, where $\theta$ is an inner function, $\kcl_{z\theta}$ is a model space, and $p$ is an isometric multiplier (will be defined later in Section \ref{prno}) on $\kcl_{z\theta}$. In addition to that, they also provide a simple formula for the action of $\Gamma$ on $\ker(\Gamma ^* \Gamma - s^2I)$ explicitly, which is completely determined by $s,p$ and $\theta$. 

It is important to note that the action of $\Gamma$ plays an important role in the description of all $s$-{\it Schmidt} pairs. Recall that \cite{GaPu20}, for a singular value $s$ of $\Gamma$, a pair $\{\xi, \eta\} \in {\ell}^2$ is called a {\it Schmidt} pair or (more precisely, an $s$-{\it Schmidt} pair) of $\Gamma$, if it satisfies $$\Gamma \xi = s\eta , \hspace{0.5cm} {\Gamma}^*\eta = s\xi .$$ The space $\ker(\Gamma ^* \Gamma - s^2I)$ is called \textit{Schmidt} subspace of $\Gamma$ and note that $s$-{\it Schmidt} pairs form a linear subspace of dimension $dim \ker(\Gamma ^* \Gamma - s^2I) \leq \infty$. Therefore the problem of description of all $s$-{\it Schmidt} pairs of $\Gamma$ is equivalent to the problem of the description of the action of $\Gamma$ on $\ker(\Gamma ^* \Gamma - s^2I)$. Also there is another advantage of this action. Suppose $\mathcal{C}$ is an anti-linear map on $\ell^2$ defined as $\mathcal{C}\xi =\bar{\xi}$, then the Hankel matrix $\Gamma$ is $\mathcal{C}$-symmetric, that is $\Gamma \mathcal{C} = \mathcal{C}{\Gamma}^*$. It is easy to observe that $$\xi \in \ker(\Gamma ^* \Gamma - s^2I) \hspace{0.5cm} \Leftrightarrow \hspace{0.5cm}  \bar{\xi} \in \ker(\Gamma  \Gamma ^* - s^2I). $$ Moreover, the anti-linear map $\Gamma \mathcal{C}$ maps $\ker(\Gamma  \Gamma ^* - s^2I)$ onto itself and the map $$ s^{-1}\Gamma \mathcal{C} : \ker(\Gamma  \Gamma ^* - s^2I) \to \ker(\Gamma  \Gamma ^* - s^2I) $$ is an involution. Therefore, using the action of $\Gamma$ on $\ker(\Gamma ^* \Gamma - s^2I)$ one can easily describe the involution map $s^{-1}\Gamma \mathcal{C}$.

Recently, in 2021, G\'{e}rard and Pushnitski \cite{GaPu21} established an excellent connection between these {\it Schmidt} subspaces with nearly $S^*$-invariant subspaces and using Hitt's \cite{Hitt} beautiful characterization of nearly $S^*$- invariant subspaces, authors gave an alternative proof (indeed, a short proof of the main result of \cite{GaPu20}) concerning the characterization of such  \textit{Schmidt} subspaces of Hankel operator.

It is important to note that there are various pieces of literature (see, e.g., \cite{paperB},\cite{GGAst}\cite{Treil}) related to the study of the spectrum of $\Gamma ^* \Gamma$  and self-adjoint  Hankel operator (see, e.g., \cite{GGEsterle},\cite{MPT}).

In this article, we extend the study of Schmidt subspaces associated with a scalar-valued Hankel operator to a matrix-valued Hankel operator. It has been noted that sometimes matrix-valued analysis is much more complicated than scalar-valued analysis due to the non-commutative property of the matrix. On the other hand, sometimes, the matrix-valued technique solves long-standing open problems for scalar cases. For example, using some new matrix-valued technique and a deep analysis of nearly $S^*$-invariant subspaces of vector-valued Hardy space, Aleman and Vukoti\'{c} \cite{AlVu09} proved that the product of finitely many Toeplitz operators on the vector-valued Hardy space is zero if and only if at least one of the symbols of the operators is singular, which answer a long-standing open problem regarding the zero-product of finitely many Toeplitz operators in scalar-valued Hardy space. Motivated by the work of G\'{e}rard and Pushnitski in \cite{GaPu20, GaPu21}, we study the Problem \ref{p1} in the case of the vector-valued Hardy spaces. More precisely, this article gives the answer to the following question completely.

\begin{prob}\label{p2}
	For a given Hankel operator $H_U$ with symmetric symbol $U\in\bmoa(\T,\C^m)$, how can one characterize eigenspaces $\ker(H_U ^* H_U - s^2I), s>0$, as a class of subspaces in the Hardy space $\hdcm$? 
\end{prob}

Moreover, we also obtain the following results in the sequel.
\begin{itemize}
	\item  In some special cases we show that Schmidt subspaces $\ker(H_U ^* H_U - s^2I), s>0$ become nearly $S^*$-invariant. 
	\item  Furthermore, we calculate precisely the action of the Hankel operator for the above mentioned cases on $\ker(H_U ^* H_U - s^2I), s>0$.
\end{itemize}
The novelty of our work lies in the fact that we obtain again a short proof of the characterization results concerning the structure of  Schmidt subspaces in scalar-valued Hardy space (see Theorem~\ref{thmscalar}) obtained by G\'{e}rard and Pushnitski as a consequence of our main results (see Theorem~\ref{nearly_defect_theorem} and \ref{main_structure_theorem})), in an alternative way compared to \cite{GaPu20, GaPu21}. 

Let us now describe about the methodology and the new tools used in our work. Because of the non-commutativity property of the symbol of the Hankel operator on $\hdcm$, several issues arise, and the classical scalar-valued methods fail to resolve them.
\begin{justify}
	\begin{itemize}
	\item[(i)]\textit{Nearly $S^*$-invariant subspaces with finite defect}: It is noted that non-trivial Schmidt subspaces $\smdt$ of $H_U$ are not always nearly invariant subspaces in the vector-valued Hardy space. So we introduce the notion of nearly $S^*$- invariant subspaces with a finite defect in $\hdcm$ (see \cite{CDP,OR}), which is one of the new ingredients in studying such spaces. We have shown that non-trivial Schmidt subspaces $\smdt$ of $H_U$ are nearly $S^*$-invariant subspaces with defect less or equal to $m$ (see Theorem \ref{nearly_defect_theorem}).
	\item[(ii)]\textit{CDP $\&$ O'Loughlin structural result for nearly $S^*$-invariant subspaces:} The second new tool is the structural theorem of nearly $S^*$-invariant subspaces in vector-valued Hardy space due to Chattopadhyay-Das-Pradh
	-an (CDP) \cite[Theorem 3.5]{CDP} and O'Loughlin \cite[Theorem 3.4]{OR}. Using these characterization results, we give a complete characterization of Schmidt subspaces in a vector-valued setting (see Theorem \ref{main_structure_theorem}).
\end{itemize}
\end{justify}

All unexplained notations used in this section are introduced and explained in the next section ({\it i.e. Section \ref{prno}}).

We end the introduction by briefly mentioning the organization of the paper.  In Section \ref{prno}, we describe all necessary notations, definitions and results related to \textit{Schmidt} subspaces of Hankel operators. Section \ref{structure} deals with the study of the complete structure of the \textit{Schmidt} subspaces in general. In Section \ref{action}, we describe the action of a specific class of Hankel operators on their \textit{Schmidt} subspaces and conclude the section with a natural question.

\section{Preliminaries and Notations}\label{prno}
We begin this section by recalling the definition of vector-valued Hardy space over the unit disk $\D$. Let $(\hile,\,\|\cdot\|_\hile)$ be a complex separable Hilbert space, then the $\hile$-valued Hardy space over the unit disk $\mathbb{D}$ is denoted by $H^2_{\hile}(\D)$ and defined by
\begin{align*}
	H^2_{\hile}(\D):=\Big\{F(z)=\sum_{n\geq 0} h_nz^n:~\|F\|^2= \sum_{n\geq 0}~\norm{h_n}_{\hile}^2<\infty,~h_n\in\hile, z\in\D\Big\}. 
\end{align*} 
Let $(\mathcal{X},\|\cdot\|_\mathcal{X})$ be a finite dimensional normed linear space. The $\mathcal{X}$-valued \\$L^p(\T, \mathcal{X})$- spaces are defined to be
\[L^p(\T, \mathcal{X}):=\{f:\T\to\mathcal{X} ~\text{measurable}~|~ \|f\|_p^p:=\int_\T \|f\|_{\mathcal{X}}^p\, dm<\infty\},\] where $dm$ is the normalized arc-length measure on $\T$. For an integer $k$ and an $L^1(\T,\mathcal{X})$-function $f$, the $k$-th order Fourier coefficient $\hat{f}(k)$ of $f$ is given by \[\hat{f}(k)=\dfrac{1}{2\pi}\int_{0}^{2\pi}f(e^{i\theta})e^{-ik\theta}\,d\theta,\]
where the above integration is interpreted by the Bochner integral sense.  For any $f\in L^1(\T,\mathcal{X})$, we define $\mathbb{P}_{\mathcal{X}}^+(f)=\sum\limits_{k=0}^{\infty}\hat{f}(k)z^k$.

Regarding this work, we will only consider $\mathcal{E}=\C^m$ for some fixed natural number $m$. We can also view $\hdcm$ as the direct sum of $m$-copies of $H^2_{\mathbb{C}}(\D)$ or as a tensor product of two Hilbert spaces $H^2_{\mathbb{C}}(\D)$ and $\C^m$, that is, $$H^2_{\hdcm}(\D)  \equiv   \underbrace{H^2_{\mathbb{C}}(\D)\oplus \cdots\oplus H^2_{\mathbb{C}}(\D)}_{m}\equiv H^2_{\mathbb{C}}(\D)\otimes \C^m.$$

Therefore $\hdcm$ can be embedded isometrically as a closed subspace of
$L^2(\mathbb{T},\C^m)$ by identifying $\hdcm$ through the non-tangential boundary limits of the $\hdcm$ functions. Let $P_m$ denotes the orthogonal projection of $L^2(\T,\C^m)$ onto $\hdcm$.  Let $\mathbb{P}_m=\mathbb{P}_{\C^m}^{+}$. Note that on $L^2(\T,\C^m)$, $\mathbb{P}_m=P_m$. Let $r,m\in\N$, and $\mathcal{L}(\C^r,\C^m)$ denote the space of all linear operators from $\C^r$ to $\C^m$. We denote the space of all holomorphic matrix valued functions by $\text{Hol}(\D,\mathcal{L}(\C^r,\C^m) )$, and by $H^\infty (\D, \mathcal{L}(\C^{r},\C^m))$ as the subspace of $\text{Hol}(\D,\mathcal{L}(\C^r,\C^m) )$, consisting of all bounded analytic functions. A function $\Theta\in H^\infty (\D, \mathcal{L}(\C^{r},\C^m)) $ is said to be an inner multiplier (inner function) if $\Theta$ is an isometry almost everywhere on the circle $\T$. Corresponding to an inner multiplier $\Theta\in H^\infty (\D, \mathcal{L}(\C^{r},\C^m))$, the model space denoted by $\kcl_\Theta
$, and is defined as
$$\kcl_\Theta := \hdcm \ominus \Theta \hdcr.$$
By an isometric multiplier on the model space $\kcl_\Theta= \hdcm \ominus \Theta \hdcr,$ we mean an analytic function $F\in \text{Hol}(\D,\mathcal{L}(\C^m,\C^n) )$ for some $n\in\N$, such that $ FG \in H^2_{\C^n}(\D)$ for every $G\in \kcl_\Theta$ and $$\norm{FG} =\norm{G} .$$
For more on isometric multipliers, we refer \cite{Crofoot}. Recall that in scalar valued case, the $\bmoa$  is the space of all analytic $\bmo$ (functions having bounded mean oscillation) functions in $\D$, in other words $\bmoa= \bmo \cap H^2$. It follows from \cite[Theorem A2.7.]{Peller} that 
\begin{align}\label{bmoc}
	\bmoa(\T):& \myeq ~~\mathbb{P}_{\C}^{+}\bmo(\T) =\mathbb{P}_{\C}^{+}L^\infty (\T) = BMO(\T)\cap H^2(\T).
\end{align}
For more details related to $\bmo$ and $\bmoa$, we refer \cite[728p-731p]{Peller}.

Let $m\in\N$, as a generalization of scalar valued $\bmoa(\T)$, we consider $\bmoa(\T,\mathcal{L}(\C^m))$ as the space of $\mathcal{L}(\C^m)$ valued $\bmoa$-functions on $\T$, which is defined to be
\begin{equation}\label{bmocm}
	\bmoa(\T,\mathcal{L}(\C^m)):\,\myeq	~~\mathbb{P}_{\mathcal{L}(\C^m)}^{+}\,\mathcal{L}^{\infty}(\T,\mathcal{L}(\C^m)).
\end{equation}   
We use the matricial notation of $U\in\bmoa(\T,\mathcal{L}(\C^m))$ as $U=[u_{ij}]_{m\times m}$
where, each $u_{ij}\in \bmoa(\T)$. Now corresponding to symbol $U\in \bmoa(\T,\mathcal{L}(\C^m))$, we define the matrix-valued Hankel operator or block Hankel operator $H_U$ on $\hdcm$ as follows:\\
For each $F=(f_1,f_2,\ldots ,f_m)\in\hdcm$,
\begin{align}\label{def2}
	H_{U}(F) := P_m(U \bar{F}), \text{ where } \bar{F}=(\bar{f_1},\bar{f_2},\ldots ,\bar{f_m}).
\end{align}
So, the above definition \eqref{def2} of $H_U$ implies that $H_U = [H_{u_{ij}}]_{m\times m}$, where $H_{u_{ij}}$ are Hankel operators on $\hdcc$.  Therefore $H_U$ is a bounded anti linear operator on $\hdcm$. Let $U=[u_{ij}]_{m\times m}\in \bmoa(\T,\mathcal{L}(\C^m))$ such that $U$ is a symmetric operator, that is, $U=U^t$ where $U^t$ is the transpose of $U$, in other words $u_{ij}=u_{ji}$ for each $i,j$. Then it turns out that $H_U^2$ is a bounded, linear, and non-negative operator on $\hdcm$. Later, we will discuss about these properties in more detail (see Proposition \ref{Properties_HU2}). For such symbol $U$ and corresponding Hankel operator $H_U$, let us denote the $\textit{Schmidt}$ subspaces in vector valued Hardy spaces $\hdcm$ by $\smdt$ and define by
\begin{align}\label{smdt}
	\smdt \, := \, \ker(H_{U}^2-s^2I), \hspace{0.5cm} s>0.
\end{align}
As discussed earlier, our main goal is to characterize these subspaces in vector valued Hardy spaces. It is important to note that $\smdt$ is an invariant subspace for $H_U$.
\begin{hypo}\label{usym}
	Assume that $U=[u_{ij}]\in\bmoa(\T,\mathcal{L}(\C^m))$ satisfy $U=U^t$, that is $u_{ij}=u_{ji}$.
\end{hypo}
\begin{ppsn}\label{Properties_HU2}
	Assume Hypothesis \ref{usym}, and let $H_U$ be a bounded Hankel anti-linear operator as defined in \eqref{def2} on $\hdcm$. Then
	\begin{enumerate}[(i)]
		\item\label{prop1} $\langle H_{U}(F),G \rangle =\langle H_{U}(G), F\rangle$ for all $F,G \in \hdcm$.
		\item\label{prop2} $H_U^2$ is a self-adjoint bounded linear operator.
		\item\label{prop3} $H_US =S^*H_U$, where $S$ is the unilateral shift on $\hdcm$ and $S^*$ is the adjoint of $S$.
	\end{enumerate} 
\end{ppsn}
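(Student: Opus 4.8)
The plan is to obtain all three items by unwinding the definitions, with \ref{prop1} as the crucial identity from which \ref{prop2} follows formally.

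For \ref{prop1} I would use that $P_m$ is an orthogonal (hence self-adjoint) projection whose range is $\hdcm$, so that for $F,G\in\hdcm$
$$\langle H_U F,G\rangle=\langle P_m(U\bar F),G\rangle=\langle U\bar F,P_mG\rangle=\langle U\bar F,G\rangle=\int_\T\langle U(z)\overline{F(z)},G(z)\rangle_{\C^m}\,dm(z).$$
Expanding in coordinates, this equals $\sum_{i,j}\int_\T u_{ij}\,\overline{f_j}\,\overline{g_i}\,dm$. Performing the same computation with the roles of $F$ and $G$ interchanged yields $\sum_{i,j}\int_\T u_{ij}\,\overline{g_j}\,\overline{f_i}\,dm$, and after relabelling $i\leftrightarrow j$ and invoking Hypothesis \ref{usym} (i.e. $u_{ij}=u_{ji}$) the two scalar integrals coincide, which is \ref{prop1}. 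Equivalently, one reduces to the classical scalar identity $\langle H_{u}f,g\rangle=\langle H_{u}g,f\rangle$ for each entry $H_{u_{ij}}$ and then reorganizes the double sum coming from $H_U=[H_{u_{ij}}]$, again using $u_{ij}=u_{ji}$.

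For \ref{prop2}, since $H_U$ is a bounded \emph{anti}-linear operator, $H_U^2$ is automatically a bounded \emph{linear} operator, so only self-adjointness requires an argument. Using \ref{prop1} with $H_UF$ inserted into the first slot gives $\langle H_U^2F,G\rangle=\langle H_U(H_UF),G\rangle=\langle H_UG,H_UF\rangle$, while $\langle F,H_U^2G\rangle=\overline{\langle H_U(H_UG),F\rangle}=\overline{\langle H_UF,H_UG\rangle}=\langle H_UG,H_UF\rangle$, where the middle equality is \ref{prop1} applied with $H_UG$ in the first slot. Hence $\langle H_U^2F,G\rangle=\langle F,H_U^2G\rangle$ for all $F,G\in\hdcm$, i.e. $H_U^2=(H_U^2)^*$. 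The same chain also records $\langle H_U^2F,F\rangle=\|H_UF\|^2\ge 0$, the non-negativity remarked on before the statement.

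For \ref{prop3} I would compare Fourier coefficients. Writing $C_k$ for the $k$-th Fourier coefficient of the $\C^m$-valued function $U\bar F$ on $\T$, one has $H_UF=P_m(U\bar F)=\sum_{k\ge 0}C_kz^k$; since $SF=zF$ componentwise and $|z|=1$ on $\T$, $\,U\,\overline{SF}=\bar z\,U\bar F$ has Fourier coefficients $(C_{k+1})_{k\in\Z}$, whence $H_U(SF)=P_m(\bar z\,U\bar F)=\sum_{k\ge 0}C_{k+1}z^k=S^*\big(\sum_{k\ge 0}C_kz^k\big)=S^*H_UF$. This is also just the componentwise consequence of the scalar intertwining $H_{u_{ij}}S=S^*H_{u_{ij}}$ combined with $H_U=[H_{u_{ij}}]$. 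The only delicate point in the whole argument is the bookkeeping with anti-linearity in \ref{prop2} — tracking which inner-product slot carries the conjugation — and making sure Hypothesis \ref{usym} is invoked precisely where the double sum is rearranged in \ref{prop1}; beyond that the proposition is a routine verification and I do not expect a genuine obstacle.
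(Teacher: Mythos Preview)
Your proposal is correct and follows essentially the same approach as the paper: the coordinate expansion for \ref{prop1} using $u_{ij}=u_{ji}$, the formal derivation of \ref{prop2} from \ref{prop1} (you are actually more explicit here than the paper, which simply asserts the implication), and for \ref{prop3} the paper uses the projection decomposition $P_m(\bar z\,U\bar F)=P_m(\bar z\,P_m(U\bar F))+P_m(\bar z\,(I-P_m)(U\bar F))$ with the second term vanishing, which is equivalent to your Fourier-coefficient bookkeeping.
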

\begin{proof}
	~
	\begin{enumerate}[(i)]
		\item Let $F=(f_1,f_2,\ldots ,f_m)$, $G=(g_1,g_2,\ldots ,g_m)$. Then
		\begin{align*}
			&\langle H_{U}(F),G \rangle\\
			& = \langle U\bar{F},G \rangle \\
			& = \langle u_{11}\bar{f_1}+u_{12}\bar{f_2}+\cdots +u_{1m}\bar{f_m} ,g_1 \rangle + \cdots + \langle u_{m1}\bar{f_1}+u_{m2}\bar{f_2}+\cdots +u_{mm}\bar{f_m} ,g_m \rangle \\
			& = \langle u_{11}\bar{g_1}+u_{21}\bar{g_2}+\cdots +u_{m1}\bar{g_m} ,f_1 \rangle + \cdots + \langle u_{1m}\bar{g_1}+u_{2m}\bar{g_2}+\cdots +u_{mm}\bar{g_m} ,f_m \rangle \\ 
			& = \langle u_{11}\bar{g_1}+u_{12}\bar{g_2}+\cdots +u_{1m}\bar{g_m} ,f_1 \rangle + \cdots + \langle u_{m1}\bar{g_1}+u_{m2}\bar{g_2}+\cdots +u_{mm}\bar{g_m} ,f_m \rangle \\
			& = \langle U\bar{G},F \rangle	= \langle H_{U}(G), F\rangle .
		\end{align*}
		\item  The boundedness of $H_U$ and the property \eqref{prop1} together implies the property \eqref{prop2}. 
		\item For $F\in\hdcm$, 
		\begin{align*}
			H_US(F)=P_m(U\bar{z}\bar{F})=\,&P_m(\bar{z}U\bar{F})=P_m\left[\bar{z}\left(P_m+(I-P_m)\right)(U\bar{F})\right]\\
			=\,&P_m\big(\bar{z}P_m(U\bar{F})\big)+P_m\big(\bar{z}(I-P_m)(U\bar{F})\big)\\
			=\,&P_m\big(\bar{z}P_m(U\bar{F})\big)\\
			=\,&S^*H_U(F).
		\end{align*}
	\end{enumerate}
\end{proof}

\begin{rmrk}
	Note that the last property \eqref{prop3} holds true for any $U=[u_{ij}]\in\bmoa$, and it is basically characterizes all the bounded Hankel operators on $\hdcm$. Then the following identity follows from the above Proposition \ref{Properties_HU2}.
	\begin{equation*}
		S^*H_U^2S = H_U^2 - \sum_{i=1}^{m}\left \langle \cdot, U_i \right \rangle U_i,
	\end{equation*}
	where  $U_i=[u_{1i},u_{2i},\ldots,u_{mi}]^t$ for $i\in \{1,2,\ldots ,m\}$.
\end{rmrk}  
Going further, we need the help of the following auxiliary Hankel operator $K_U$ (in scalar-valued case the notion of such kind of operator mentioned in \cite{GaPu20, GaPu21}):
\begin{align}\label{ku}
	K_U :&= H_U S
	=S^*H_U =H_{S^*U}
\end{align}
on $\hdcm$, where $U$ satisfies the Hypothesis \ref{usym}.
Depending on the context, we will use suitable definition of $K_U$. 
Since we are in the vector-valued Hardy space $\hdcm$, then we have
\begin{align*}
	SS^* &= I_{\hdcm} -P_{_{\C^m}} \\
	& = I_{\hdcm} - \sum_{i=1}^{m} \langle \cdot, k_{0}\otimes e_i \rangle k_{0}\otimes e_i,
\end{align*}
where $P_{_{\C^m}}$ is the orthogonal projection from $\hdcm$ onto  $\C^m$, and $\{e_i\}_{i=1}^m$ is the standard orthonormal basis of $\C^m$, and $k_{0}$ is the reproducing kernel of $\hdcc$ at $0$. 

Recall that $U=[u_{ij}]_{m\times m}$ with $u_{ij}=u_{ji}$. For each $i\in \{1,2,\ldots ,m\}$, let us denote the ith column of $U$ by $$U_i=[u_{1i},u_{2i},\ldots,u_{mi}]^t.$$
Then it turns out,
\begin{align*}
	H_U(e_i) &= U_i=[u_{1i},u_{2i},\ldots,u_{mi}]^t .
\end{align*}
From the definition \eqref{ku} of $K_U$, it follows that $K_U$ is a bounded anti linear operator, which further imply that $K_U ^2$ is a bounded linear operator on $\hdcm$. Next we give a precise form of the operator $K_U^2$ in terms of $H_U^2$ as follows
\begin{align*}
	{K^2_U} & = (H_U S)(H_U S) = H_U SS^* H_U \\
	& = H_U \left(I_{\hdcm} - \sum_{i=1}^{m} \langle \cdot, k_{0}\otimes e_i \rangle k_{0}\otimes e_i\right) H_U \\
	& = {H^2_U} - \sum_{i=1}^{m} \left \langle \cdot, U_i \right \rangle U_i ,
\end{align*} 
where $\sum\limits_{i=1}^{m}\langle \cdot, U_i  \rangle U_i$ is a finite rank operator generated by $\{U_1,U_2,\ldots U_m\}$ and having rank at most $m$. Moreover, this form of $K_U^2$ guarantees that $K_U^2$ is self-adjoint. Like the \textit{Schmidt} subspaces of $H_U$, similarly we also define the eigenspaces related to $K_U$ as follows:
\begin{align}
	\kmdt \, := \, \ker(K_{U}^2-s^2I), ~~s>0,
\end{align}  
and these eigenspaces $\kmdt$ are basically the \textit{Schmidt} subspaces corresponding to the Hankel operator $K_U(=H_{S^*U})$. 
The next observation is crucial to characterize the \textit{Schmidt} subspaces of $H_U$.
\begin{lma}\label{ehu_eku_relation}
	Let $U$ satisfy the Hypothesis \ref{usym}. Then 
	\begin{align*}
		\smdt \cap \{U_1,U_2,\ldots ,U_m\}^\perp = \kmdt \cap \{U_1,U_2,\ldots ,U_m\}^\perp,
	\end{align*}
	where  $U_i=[u_{1i},u_{2i},\ldots,u_{mi}]^t$ for $i\in \{1,2,\ldots ,m\}$, and $$\{U_1,U_2,\ldots ,U_m\}^\perp=\{F\in\hdcm: \langle F,U_i \rangle=0 \text{ for } 1\leq i\leq m\}.$$
\end{lma}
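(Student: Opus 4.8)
The plan is to show the two intersections agree by proving that on the subspace $\{U_1,\dots,U_m\}^\perp$ the operators $H_U^2$ and $K_U^2$ act identically, and moreover map that subspace into itself (so that the eigenvalue equations make sense simultaneously). Recall from the discussion preceding the lemma that
\begin{equation*}
	K_U^2 = H_U^2 - \sum_{i=1}^{m}\langle\,\cdot\,,U_i\rangle U_i .
\end{equation*}
Hence for any $F\in\{U_1,\dots,U_m\}^\perp$ we have $\langle F,U_i\rangle = 0$ for every $i$, so the finite-rank correction annihilates $F$ and $K_U^2 F = H_U^2 F$. Thus $H_U^2 F = s^2 F$ if and only if $K_U^2 F = s^2 F$, for such $F$. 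This already gives the inclusion $\smdt\cap\{U_i\}^\perp \subseteq \kmdt\cap\{U_i\}^\perp$ and conversely, \emph{provided} we know the relevant vectors stay inside $\{U_1,\dots,U_m\}^\perp$; but this is automatic since we are simply intersecting the eigenspaces with that fixed subspace on both sides, so no invariance is actually needed — the equality of the two sets follows directly from the pointwise identity $K_U^2 F = H_U^2 F$ on $\{U_1,\dots,U_m\}^\perp$.

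More carefully, I would argue as follows. Take $F$ in the left-hand side: then $F\in\{U_1,\dots,U_m\}^\perp$ and $H_U^2 F = s^2 F$. By the displayed formula and $\langle F,U_i\rangle=0$, we get $K_U^2 F = H_U^2 F - \sum_i \langle F,U_i\rangle U_i = H_U^2 F = s^2 F$, so $F\in\kmdt$; combined with $F\in\{U_1,\dots,U_m\}^\perp$ this puts $F$ in the right-hand side. The reverse inclusion is symmetric: if $F\in\{U_1,\dots,U_m\}^\perp$ and $K_U^2 F = s^2 F$, then $H_U^2 F = K_U^2 F + \sum_i\langle F,U_i\rangle U_i = K_U^2 F = s^2 F$, so $F\in\smdt$. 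This proves both inclusions and hence the equality.

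There is essentially no obstacle here: the entire content is the rank-$m$ perturbation identity relating $K_U^2$ and $H_U^2$, which has already been derived in the text, together with the trivial observation that a vector orthogonal to all the $U_i$ is killed by the perturbation term. The only point requiring a line of justification is that $H_U^2$ (equivalently $K_U^2$) is self-adjoint, so that the eigenspaces $\smdt$, $\kmdt$ are genuine eigenspaces and the inner-product pairings $\langle F,U_i\rangle$ behave as expected; but that is exactly Proposition~\ref{Properties_HU2}\eqref{prop2} and the corresponding remark for $K_U^2$, both already established. Hence the proof is a short direct computation with no delicate step.
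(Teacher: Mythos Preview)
Your proof is correct and is exactly the argument the paper has in mind: the paper's own proof simply says ``trivial, follows from the definition of the corresponding subspaces,'' which amounts to invoking the already-derived identity $K_U^2 = H_U^2 - \sum_i \langle\,\cdot\,,U_i\rangle U_i$ and observing that the correction term vanishes on $\{U_1,\dots,U_m\}^\perp$. You have just written this out explicitly.
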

\begin{proof}
	The proof is trivial, follows from the definition of the corresponding subspaces.
\end{proof}
Now, let us recall some useful existing definition in literature required to understand the structure of \textit{Schmidt} subspaces. 
\begin{dfn}
	A closed subspace $\mathcal{M}$ of $\hdcm$ is said to be nearly $S^*$-invariant if every element  $F\in \mathcal{M}$ with $F(0)=0$ satisfies $S^*F\in \mathcal{M}$. As a generalization, we call a closed subspace $\mcl \subset \hdcm$ to be nearly $S^*$-invariant with defect p if and only if there is an $p$-dimensional subspace $\fcl \subset \hdcm$ (which may be taken to be orthogonal to $\mcl$ ) such that if $F \in \mathcal{M},F(0)=0$ then $S^*F$ $\in \mathcal{M}\oplus \fcl$.
\end{dfn}
Nearly $S^*$-invariant subspaces were first introduced by Hitt \cite{Hitt} and further studied by various authors (see, e.g.,  \cite{Aleman,Hayashi,Sarason}). These subspaces were also studied in connection with kernels of Toeplitz operator (see, e.g., \cite{Partington,Dyakonov,Fricain,Hartmann}).
Characterization of \textit{nearly $S^*$-invariant} subspaces in the vector-valued Hardy space was obtained by  Chalendar, Chevrot, and Partington (C-C-P) \cite{CCP}, which provides a vectorial generalization of  Hitt's \cite{Hitt} characterization.  

\begin{thm} \cite[Theorem 4.4]{CCP}\label{ccp_thm}
	Let $\fcl$ be a nearly $S^*$-invariant subspace of $\hdcm$ and let $(W_1,W_2,\ldots,W_r)$ be an orthonormal basis of \hspace{.05in} $\wcl : = \fcl \ominus (\fcl~\cap z\hdcm)$. Let $F_0$ be the $m\times r$ matrix whose columns are $W_1,W_2,\ldots,W_r$. Then there exists an isometric mapping $J:\fcl \rightarrow \fcl ^\prime$ given by $F_0G\mapsto G$, where $\fcl ^\prime:=\{G\in \hdcr :\exists F\in\fcl,F=F_0G \}$. Moreover $\fcl ^\prime$ is $S^*$ invariant.
\end{thm}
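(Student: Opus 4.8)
The plan is to adapt Hitt's proof of the scalar case, handling the non-commutative points with care.

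First I would record that $r=\dim\wcl\le m$, so that the orthonormal basis $(W_{1},\dots,W_{r})$ and the $m\times r$ matrix $F_{0}$ make sense: evaluation at $0$ is injective on $\wcl$, since any $W\in\wcl$ with $W(0)=0$ lies in $\fcl\cap z\hdcm$ while being orthogonal to it, hence vanishes; in particular $F_{0}(0)$ has linearly independent columns. Fix $F\in\fcl$ and run the usual recursion $F^{(0)}=F$, $w_{N}=P_{\wcl}F^{(N)}$, $F^{(N+1)}=S^{*}(F^{(N)}-w_{N})$. Here $F^{(N)}-w_{N}$ lies in $\fcl$ and is orthogonal to $\wcl$, hence lies in $\fcl\cap z\hdcm$; it therefore vanishes at $0$, so near $S^{*}$-invariance keeps $F^{(N+1)}$ in $\fcl$, and $F^{(N)}=w_{N}+zF^{(N+1)}$ is an orthogonal splitting. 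Telescoping gives $F=\sum_{j=0}^{N}z^{j}w_{j}+z^{N+1}F^{(N+1)}$ and $\|F\|^{2}=\sum_{j=0}^{N}\|w_{j}\|^{2}+\|F^{(N+1)}\|^{2}$. Writing $w_{j}=\sum_{i=1}^{r}(c_{j})_{i}W_{i}$ with $c_{j}\in\C^{r}$, orthonormality gives $\|c_{j}\|=\|w_{j}\|$, so $\sum_{j}\|w_{j}\|^{2}\le\|F\|^{2}$ forces $G:=\sum_{j\ge0}c_{j}z^{j}\in\hdcr$ with $\|G\|^{2}=\sum_{j}\|w_{j}\|^{2}$.

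Next I would show $F=F_{0}G$. The polynomials $p_{N}=\sum_{j\le N}c_{j}z^{j}$ converge to $G$ in $\hdcr$, so $F_{0}p_{N}\to F_{0}G$ pointwise on $\D$; but $F_{0}p_{N}=\sum_{j\le N}z^{j}w_{j}=F-z^{N+1}F^{(N+1)}$, and $z^{N+1}F^{(N+1)}\to0$ pointwise on $\D$ because $\|F^{(N+1)}\|\le\|F\|$. Hence $F=F_{0}G$, so $G\in\fcl'$. Setting $JF:=G$ defines a map $\fcl\to\fcl'$; it is well defined because $F_{0}(G_{1}-G_{2})=0$ forces $G_{1}=G_{2}$ (compare Taylor coefficients and use injectivity of $F_{0}(0)$), and it is a contraction since $\|JF\|^{2}=\sum_{j}\|w_{j}\|^{2}\le\|F\|^{2}$.

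The crux is to upgrade this contraction to an isometry, i.e.\ to prove $\|F^{(N)}\|\to0$, equivalently $\sum_{j}\|w_{j}\|^{2}=\|F\|^{2}$. One cannot simply pass to the limit in $F^{(N)}=F_{0}(S^{*})^{N}G$, since multiplication by $F_{0}$ need not be bounded (the $W_{i}$ are not assumed inner, nor even in $H^{\infty}$). One does get $F^{(N)}\rightharpoonup 0$ weakly --- indeed $\|w_{N}\|\to0$ yields $F^{(N)}(0)=w_{N}(0)\to0$, and iterating $F^{(N+1)}=S^{*}F^{(N)}-S^{*}w_{N}$ shows that every weak subsequential limit $V\in\fcl$ of $(F^{(N)})$ has $(S^{*})^{j}V$ vanishing at $0$ for all $j$, hence $V=0$ --- but deducing $\|F^{(N)}\|\to0$ from weak convergence and the monotonicity $\|F^{(N+1)}\|\le\|F^{(N)}\|$ is the genuinely delicate ``no loss of mass'' step of Hitt's argument, whose vector-valued form is exactly what Chalendar--Chevrot--Partington supply. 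I expect this to be the main obstacle.

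Finally I would check the two remaining claims. $J$ is onto $\fcl'$: for $G'\in\fcl'$ the canonical decomposition of $F_{0}G'\in\fcl$ reproduces $G'$ (again by comparing Taylor coefficients), so $G'=J(F_{0}G')$; thus $J$ is an isometric bijection. And $\fcl'$ is $S^{*}$-invariant: given $G\in\fcl'$, put $F=F_{0}G\in\fcl$; then $F_{0}(S^{*}G)=S^{*}(F-P_{\wcl}F)$, which lies in $\fcl$ by near $S^{*}$-invariance because $F-P_{\wcl}F\in\fcl\cap z\hdcm$, so $S^{*}G\in\fcl'$. Apart from the isometry step, everything is bookkeeping with orthogonality and near $S^{*}$-invariance.
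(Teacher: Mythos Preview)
The paper does not prove this theorem at all: it is quoted from Chalendar--Chevrot--Partington \cite{CCP} and used as a black box. So there is no ``paper's proof'' to compare against; your outline is in fact a sketch of the original CCP argument itself.

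That said, your write-up contains a genuine gap, and you flag it yourself. Everything up to and including the construction of $G$, the identity $F=F_{0}G$, the well-definedness of $J$, the surjectivity, and the $S^{*}$-invariance of $\fcl'$ is correct bookkeeping. The missing piece is precisely the isometry: you need $\|F^{(N)}\|\to 0$, and you explicitly do not prove it, writing instead that this is ``exactly what Chalendar--Chevrot--Partington supply.'' But the statement you are trying to prove \emph{is} the CCP theorem, so deferring the crux back to CCP is circular. Your weak-convergence argument is fine as far as it goes (bounded sequence, every weak cluster point has all $S^{*}$-iterates vanishing at $0$, hence is zero), but weak convergence plus monotone norms does not force norm convergence without further input. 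In the scalar case Hitt closes this gap via an explicit computation showing that the partial isometry $f\mapsto gf$ (with $g$ the normalised extremal element of $\wcl$) is isometric on the relevant $S^{*}$-invariant subspace; CCP's contribution is the vector-valued analogue, which involves showing that the columns of $F_{0}$ interact correctly so that $\|F_{0}G\|=\|G\|$ for all $G$ in the image. You would need to reproduce that computation --- typically by analysing the Gram matrix $F_{0}^{*}F_{0}$ or by an inductive orthogonality argument on the pieces $z^{j}w_{j}$ --- to have a complete proof.
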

As a corollary of this theorem they have the following result,
\begin{crlre}\label{ccp_corollary}Assume the notations used in the above Theorem \ref{ccp_thm}. Then there exists an inner function $\Phi\in H^\infty(\D,\mathcal{L}(\mathbb{C}^{r\prime},\mathbb{C}^r))$, which is unique up to unitary equivalence and vanishes at zero, such that $\fcl= F_0(\hdcr \ominus \Phi\hdcrp).$
\end{crlre}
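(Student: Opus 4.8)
The plan is to read off the structure of $\fcl$ from that of $\fcl'$ through the isometry $J$ of Theorem~\ref{ccp_thm}, using the vectorial Beurling--Lax--Halmos theorem on shift-invariant subspaces of $\hdcr$. First I would record that $\fcl' = J(\fcl)$ is a closed subspace of $\hdcr$: it is linear because $J$ is linear and $\fcl$ is a subspace, and it is closed because $J$ is isometric and $\fcl$ is complete. By Theorem~\ref{ccp_thm} the space $\fcl'$ is $S^*$-invariant, so its orthogonal complement $\mathcal{N} := \hdcr \ominus \fcl'$ is invariant under the forward shift $S$. The Beurling--Lax--Halmos theorem then produces an inner function $\Phi \in H^\infty(\D,\mathcal{L}(\C^{r'},\C^r))$, unique up to a constant unitary factor on the right, with $\mathcal{N} = \Phi\hdcrp$; here $r' = \dim(\mathcal{N}\ominus z\mathcal{N}) \le r$, the bound holding because $\Phi(\zeta)$ is an isometry of $\C^{r'}$ into $\C^r$ for a.e.\ $\zeta\in\T$. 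Hence $\fcl' = \hdcr \ominus \Phi\hdcrp$, and in the degenerate case $\mathcal{N} = \{0\}$ one simply takes $\Phi \equiv 0$.

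Next I would pin down the normalization $\Phi(0) = 0$. It suffices to check that every constant function $c\in\C^r$, viewed as an element of $\hdcr$, lies in $\fcl'$. Indeed, writing $F_0 = [\,W_1\,|\,\cdots\,|\,W_r\,]$ with the chosen orthonormal basis of $\wcl$ as its columns, we have $F_0 c = \sum_{j=1}^{r} c_j W_j \in \wcl \subseteq \fcl$, so $c\in\fcl'$ by the very definition of $\fcl'$. Consequently $\Phi\hdcrp = \mathcal{N} = \hdcr\ominus\fcl'$ is orthogonal to all constants, i.e.\ $\Phi\hdcrp\subseteq z\hdcr$; applying this to a constant function $c\in\C^{r'}$ gives $\Phi(0)c = (\Phi c)(0) = 0$ for every such $c$, whence $\Phi(0) = 0$.

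Finally I would transport the description back through $J$. Since $J\colon\fcl\to\fcl'$ is a linear isometric bijection with $J(F_0 G) = G$, its inverse is $G\mapsto F_0 G$, and therefore
\[
\fcl = J^{-1}(\fcl') = F_0\,\fcl' = F_0\bigl(\hdcr\ominus\Phi\hdcrp\bigr),
\]
which is the claim. Uniqueness of $\Phi$ up to unitary equivalence is inherited directly from the uniqueness clause in Beurling--Lax--Halmos, since under the notation of Theorem~\ref{ccp_thm} the data $\fcl$, $F_0$, and hence $\fcl'$ and $\mathcal{N}$, are all fixed. I do not anticipate a genuine obstacle: the corollary is essentially a restatement of Theorem~\ref{ccp_thm} in the language of model spaces, and the only step going beyond routine bookkeeping is the one-line observation that the extremal vectors $W_1,\dots,W_r$ push the constants into $\fcl'$, which is exactly what forces the normalization $\Phi(0) = 0$.
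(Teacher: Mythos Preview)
Your argument is correct and is precisely the natural route: pass from $\fcl$ to the $S^*$-invariant space $\fcl'$ via the isometry $J$, apply Beurling--Lax--Halmos to $(\fcl')^\perp$, and read off $\Phi(0)=0$ from the fact that $F_0$ carries constants into $\wcl\subseteq\fcl$. The paper itself does not supply a proof here---it quotes the corollary directly from \cite{CCP}---so there is nothing to compare against beyond noting that your derivation is the standard one underlying that reference.
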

Recently, in \cite[Theorem 3.5]{CDP}, the authors have provided a complete characterization of \textit{nearly $S^*$-invariant} subspaces with finite defect in the vector-valued Hardy space $\hdcm$, which extends the result of Chalendar, Gallardo, Partington \cite{CGP} from scalar valued setting to vector-valued setting. In this connection, it is worth mentioning that, O'Loughlin \cite{OR} obtained the similar structural result for these \textit{nearly $S^*$-invariant}  subspaces independently. Note that Theorem 3.5 in \cite{CDP} (see also, \cite[Theorem 3.4]{OR}) is one of the key tools to solve our problem. For more applications on the characterization results of  \textit{nearly $S^*$-invariant} subspaces with finite defect $\hdcm$, we refer to \cite{CDP2,CD}.

\section{The Structure of $\smdt$}\label{structure}
The following lemma will be useful to prove one of our main results in this section. Throughout this section, let $U$ satisfy the Hypothesis \ref{usym}, and for $i\in\{1,2,\ldots ,m \}$, let $U_i =U(e_i)$, where $\{e_i:1\leq i\leq m\}$ is the standard orthonormal basis of $\C^m$. Then we have the following lemma:
\begin{lma}\label{dim_lemma}
	Let $\vcl:=\, \kmdt \ominus \left\{\kmdt \cap \{U_1,U_2,\ldots ,U_m\}^\perp \right\}$.
	 Then $$dim(\vcl) \leq m .$$
\end{lma}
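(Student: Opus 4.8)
The plan is to show that $\vcl$ embeds into $\C^m$ via the map $F \mapsto (\langle F, U_1\rangle, \dots, \langle F, U_m\rangle)$. Indeed, consider the linear map $\Lambda : \kmdt \to \C^m$ defined by $\Lambda F = (\langle F, U_1\rangle, \dots, \langle F, U_m\rangle)$. By definition, $\ker \Lambda = \kmdt \cap \{U_1, \dots, U_m\}^\perp$, and $\vcl$ is precisely the orthogonal complement of $\ker\Lambda$ inside $\kmdt$. Therefore the restriction $\at{\Lambda}{\vcl}$ is injective: if $F \in \vcl$ and $\Lambda F = 0$, then $F \in \ker\Lambda \cap \vcl = \{0\}$. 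An injective linear map from $\vcl$ into $\C^m$ forces $\dim(\vcl) \leq \dim(\C^m) = m$, which is the claim.

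The only point that genuinely needs checking is that $\at{\Lambda}{\vcl}$ is well-defined and injective, and this is immediate from the definition of $\vcl$ as an orthogonal complement together with the observation that $\ker\Lambda$ is exactly $\kmdt \cap \{U_1, \dots, U_m\}^\perp$. No use of the Hankel structure, of $K_U^2$, or of the relation in Lemma~\ref{ehu_eku_relation} is needed here; only elementary linear algebra in a finite-codimensional setting. I would simply remark that since $\vcl$ is defined by removing from $\kmdt$ the subspace on which all $m$ functionals $\langle \cdot, U_i\rangle$ vanish, these functionals separate points of $\vcl$, so $\vcl$ injects into $\C^m$.

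I do not anticipate any real obstacle; the statement is essentially a bookkeeping lemma isolating the defect bound. The one subtlety worth a sentence is that $\vcl$ is by construction a closed subspace of the Hilbert space $\hdcm$, and a priori could be infinite-dimensional, so the content of the lemma is that the $m$ linear constraints cut it down to dimension at most $m$; this is exactly what the injectivity of $\at{\Lambda}{\vcl}$ delivers. I would present the argument in three short sentences: define $\Lambda$, identify its kernel with $\kmdt \cap \{U_1,\dots,U_m\}^\perp$, and conclude injectivity of $\at{\Lambda}{\vcl}$ hence $\dim(\vcl)\le m$.
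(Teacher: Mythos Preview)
Your proof is correct and is essentially the dual formulation of the paper's argument. The paper shows directly that $\vcl$ is spanned by the $m$ vectors $P_{\kmdt}(U_1),\ldots,P_{\kmdt}(U_m)$, whereas you show that the $m$ functionals $\langle\,\cdot\,,U_i\rangle$ separate points of $\vcl$; since the Riesz representative of $\langle\,\cdot\,,U_i\rangle$ on $\kmdt$ is precisely $P_{\kmdt}(U_i)$, the two arguments are the same linear-algebra fact viewed from opposite sides.
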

\begin{proof}
	We first consider the set $$\mcl :=\left\{ P_{\kmdt}(U_1),P_{\kmdt}(U_2),\ldots ,P_{\kmdt}(U_m)  \right\} \subseteq \kmdt.$$ Our claim is that $\mcl$ generates the subspace $\vcl$.
	
	For $f\in \kmdt \cap \{U_1,U_2,\ldots ,U_m\}^\perp$ and each $i\in \{1,2,\ldots ,m\}$, we have
	\begin{align*}
		&\left\langle P_{\kmdt}(U_i), f \right\rangle  = \left\langle U_i, f   \right\rangle =0.
	\end{align*}
	Therefore  $ P_{\kmdt}(u_i) \in \vcl $. Next, assume  $g\in \vcl\cap\mcl^\perp$. Then, for each $i\in \{1,2,\ldots ,m\}$, we have
	\begin{align*}
		\left\langle g, P_{\kmdt}(u_i)\right\rangle =0&\implies \left\langle g,u_i  \right\rangle =0\\
		&\implies g\in \kmdt \cap \{U_1,U_2,\ldots ,U_m\}^\perp\implies g=0.
	\end{align*}
	Hence, we obtain $$\operatorname{span} \left\{ P_{\kmdt}(U_1),P_{\kmdt}(U_2),\ldots ,P_{\kmdt}(U_m)  \right\} = \vcl .$$ 
	Therefore, $dim(\vcl) \leq m $. 
\end{proof}
Now we are in a position to state and prove the main theorem of this section.
\begin{thm}\label{nearly_defect_theorem}
	Assume Hypothesis \ref{usym}, and $H_U$ is the bounded Hankel operator on $\hdcm$ given by \eqref{def2}. Then, for each $s>0$, the \textit{Schmidt} subspace $\smdt$ of $H_U$ is a \textit{nearly $S^*$-invariant} subspace of $\hdcm$ with defect at most $m$.
\end{thm}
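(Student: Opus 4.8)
The plan is to take an arbitrary $F\in\smdt$ with $F(0)=0$ and show that $S^*F$ lies in $\smdt\oplus\fcl$ for a fixed subspace $\fcl$ of dimension at most $m$. The natural candidate for the defect space is $\fcl:=\vcl=\kmdt\ominus\big(\kmdt\cap\{U_1,\dots,U_m\}^{\perp}\big)$, which has dimension at most $m$ by Lemma~\ref{dim_lemma}; one may of course replace it by its orthogonal projection to a subspace of $(\smdt)^{\perp}$ at the end if one insists on the defect space being orthogonal to $\smdt$, but the definition of nearly $S^*$-invariant with defect only requires the existence of \emph{some} $p$-dimensional $\fcl$, so I would not worry about orthogonality.

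The key computation uses the identity $K_U^2 = H_U^2 - \sum_{i=1}^m\langle\cdot,U_i\rangle U_i$ together with $S^*H_U = H_US = K_U$ and $S^*H_U^2 S = H_U^2 - \sum_i\langle\cdot,U_i\rangle U_i = K_U^2$. Here is the order of steps. First, since $F\in\smdt$ and $F(0)=0$, we have $F\in z\hdcm$, so $F = SS^*F$ (because $SS^* = I - P_{\C^m}$ kills exactly the constants). Second, apply $K_U^2$ to $S^*F$: using $K_U^2 = S^*H_U^2 S$ we get $K_U^2(S^*F) = S^*H_U^2(SS^*F) = S^*H_U^2 F = S^*(s^2 F) = s^2 S^*F$, where the middle equality is $SS^*F=F$ and the last uses $F\in\smdt$. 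Hence $S^*F\in\kmdt$. Third, decompose $S^*F$ along the orthogonal sum $\kmdt = \big(\kmdt\cap\{U_1,\dots,U_m\}^{\perp}\big)\oplus\vcl$, writing $S^*F = G + V$ with $G\in\kmdt\cap\{U_1,\dots,U_m\}^{\perp}$ and $V\in\vcl$. Fourth, invoke Lemma~\ref{ehu_eku_relation}: the piece $G$, lying in $\kmdt\cap\{U_1,\dots,U_m\}^{\perp}$, also lies in $\smdt\cap\{U_1,\dots,U_m\}^{\perp}\subseteq\smdt$. Therefore $S^*F = G + V \in \smdt \oplus \vcl = \smdt\oplus\fcl$, which is exactly the nearly $S^*$-invariance with defect $\dim\fcl\le m$.

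The step I expect to require the most care is the very first one — verifying that $F\in\smdt$ with $F(0)=0$ really lets us write $F = SS^*F$ so that the operator identity $K_U^2(S^*F)=s^2S^*F$ goes through cleanly — and, relatedly, making sure the operator identities $S^*H_U^2 S = K_U^2$ and $S^*H_U = H_U S$ are applied with the correct (anti-linear) $H_U$ conventions; these were all established in Proposition~\ref{Properties_HU2} and the subsequent computation of $K_U^2$, so it is mostly a matter of bookkeeping. A secondary point worth a sentence is that the decomposition in the third step is genuinely an orthogonal direct sum inside the Hilbert space $\hdcm$, which is immediate from the definition of $\vcl$ as an orthogonal complement within $\kmdt$; and one should note that $\smdt$ and $\kmdt$ are closed (being kernels of bounded self-adjoint operators), so all the subspaces in play are closed and the statement is about honest closed subspaces. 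No Beurling-type or structural input is needed here; that is reserved for the later characterization theorem.
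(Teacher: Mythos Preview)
Your proposal is correct and follows essentially the same route as the paper's proof: show $S^*F\in\kmdt$ via the identity $K_U^2=S^*H_U^2S$, then decompose $\kmdt=\big(\kmdt\cap\{U_1,\dots,U_m\}^\perp\big)\oplus\vcl$, apply Lemma~\ref{ehu_eku_relation} to land the first summand in $\smdt$, and invoke Lemma~\ref{dim_lemma} for the defect bound. The only cosmetic difference is that the paper writes $F=SG$ rather than $F=SS^*F$, which is the same thing.
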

\begin{proof}Let $s>0$ be fixed.
	Let $F\in \smdt \cap z\hdcm$. Note that $F(0)=0$, so we can write $F =SG$ for some $G\in \hdcm$. Therefore, we have $H^2_{U}(F)=s^2 F$, and
	\begin{align*}
		K^2_{U}(G) & = S^* H^2_{U}S(G)
		= S^*H^2_{U}(F)
		= s^2 S^*(F) = s^2 G,
	\end{align*}
	which gives $G\in \kmdt$. That means $S^*(F)\in \kmdt$, which basically implies that
	\begin{align}\label{cont1}
		S^*(\smdt \cap z\hdcm ) \subseteq \kmdt .
	\end{align} 
	Let $\vcl:=\, \kmdt \ominus \left\{\kmdt \cap \{U_1,U_2,\ldots ,U_m\}^\perp \right\}$. Now we can decompose the subspace $\kmdt$ as a direct sum two subspaces as follows:
	\begin{align}\label{exp1}
		\kmdt & =\left(\kmdt \cap \{U_1,U_2,\ldots ,U_m\}^\perp\right) \oplus  \vcl.  
	\end{align}
	From Lemma \ref{ehu_eku_relation}, it follows that $$\smdt \cap \{U_1,U_2,\ldots ,U_m\}^\perp = \kmdt \cap \{U_1,U_2,\ldots ,U_m\}^\perp.$$ Therefore the above decomposition \eqref{exp1} can be rewritten as,
	\begin{align*}
		\kmdt & = \left(\smdt \cap \{U_1,U_2,\ldots ,U_m\}^\perp \right) \oplus \vcl .
	\end{align*}
	Hence \eqref{cont1} finally turns to
	\begin{align}\label{final}
		S^*\left(\smdt \cap z\hdcm \right) \subseteq \left( \smdt \cap \{U_1,U_2,\ldots ,U_m\}^\perp \right) \oplus  \vcl.
	\end{align}
	Therefore using the inclusion \eqref{final} and using Lemma \ref{dim_lemma} we conclude that $\smdt$ is a \textit{nearly $S^*$-invariant} subspaces with defect at most $m$.
\end{proof}
\noindent The next theorem gives the characterization of \textit{Schmidt} subspaces in $\hdcm$.
\begin{thm}\label{main_structure_theorem}
	Let $\smdt$ be the \textit{Schmidt} subspaces of the bounded Hankel operator $H_U$ with $U=U^t$. Then for each $s>0$, there exist an orthonormal set of vectors $\{E_1,E_2,\ldots ,E_p \}$ in $\hdcm$ such that $\{E_1,E_2,\ldots ,E_p \} \perp \smdt$ for some non-negative integer $p$ satisfying $p\leq m$, such that $\smdt$ is \textit{nearly $S^*$-invariant} of defect $p$ having the defect space $\fcl=\bigvee_{i=1}^p\{E_i\}$. Moreover, if $\{W_1,W_2,\ldots,W_r\}$ be an orthonormal basis of $\wcl:=\smdt\ominus(\smdt\cap z\hdcm)$ and let $F_0$ be the $m\times r$ matrix whose columns are $W_1,W_2,\ldots,W_r$. Then
	\begin{enumerate}[(i)]
		\item\label{s1} in the case when $ \smdt\cap z\hdcm\neq \smdt$,
		\begin{equation*}
			\smdt = \Big\{F : F(z)= F_0(z)K_0(z)+ \sum_{j=1}^{p} zk_j(z)E_j(z) : (K_0,k_1,\ldots,k_p)\in {\kcl}_{\Theta} \Big\},
		\end{equation*}
		where 
		$${\kcl}_{\Theta} = H^2_{\mathbb{C}^{r+p}}(\mathbb{D}) \ominus \Theta \hdcrp,$$ for some inner function $\Theta\in H^\infty(\D,\mathcal{L}(\mathbb{C}^{r\prime},\mathbb{C}^{r+p}))$ and   
		$$\norm{F}^2=\norm{K_0}^2+\sum_{j=1}^{p}\norm{k_j}^2 .$$
		\item\label{s3} In the case when $\smdt\cap z\hdcm= \smdt$, 
		\begin{equation*}
			\smdt = \Big\{F : F(z)= \sum_{j=1}^{p} zk_j(z)E_j(z) : (k_1,\ldots,k_p)\in \mathcal{K}_{\Phi} \Big\},
		\end{equation*}
		with the same notion as in \eqref{s1} except that $\mathcal{K}_{\Phi}= H^2_{\mathbb{C}^{p}}(\mathbb{D})\ominus \Phi H^2_{\mathbb{C}^{p^\prime}}(\mathbb{D})$ for some inner function $\Phi\in H^\infty(\D,\mathcal{L}(\mathbb{C}^{p^\prime},\mathbb{C}^{p}))$ and $\norm{F}^2=\sum_{j=1}^{p}\norm{k_j}^2 .$
	\end{enumerate}
\end{thm}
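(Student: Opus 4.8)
The plan is to feed the conclusion of Theorem~\ref{nearly_defect_theorem} into the structural theorem for nearly $S^*$-invariant subspaces with finite defect, \cite[Theorem~3.5]{CDP} (equivalently \cite[Theorem~3.4]{OR}) --- the two ``new ingredients'' advertised in the introduction. Since both of these are already in hand, the remaining argument is essentially bookkeeping: extract from Theorem~\ref{nearly_defect_theorem} a genuine defect space of dimension at most $m$, and then quote the structural theorem to read off cases \eqref{s1} and \eqref{s3}.

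Fix $s>0$; if $\smdt=\{0\}$ there is nothing to prove ($p=0$), so assume $\smdt\neq\{0\}$. First I would revisit the proof of Theorem~\ref{nearly_defect_theorem}, which produces the inclusion
\[
S^*\bigl(\smdt\cap z\hdcm\bigr)\subseteq\bigl(\smdt\cap\{U_1,\ldots,U_m\}^\perp\bigr)\oplus\vcl\subseteq\smdt+\vcl,
\]
where $\vcl=\kmdt\ominus\bigl(\kmdt\cap\{U_1,\ldots,U_m\}^\perp\bigr)$ and $\dim\vcl\le m$ by Lemma~\ref{dim_lemma}. Because $\vcl$ is finite dimensional, $\smdt+\vcl$ is closed, so I would set $\fcl:=(\smdt+\vcl)\ominus\smdt$; then $\fcl\perp\smdt$, $\smdt\oplus\fcl=\smdt+\vcl$, and $p:=\dim\fcl\le\dim\vcl\le m$. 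Choosing an orthonormal basis $\{E_1,\ldots,E_p\}$ of $\fcl$, the displayed inclusion becomes $S^*(\smdt\cap z\hdcm)\subseteq\smdt\oplus\fcl$, and since an $F\in\hdcm$ with $F(0)=0$ is precisely an element of $z\hdcm$, this says exactly that $\smdt$ is nearly $S^*$-invariant with defect $p\le m$ and orthogonal defect space $\fcl=\bigvee_{i=1}^p\{E_i\}$ --- the first assertion of the theorem.

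Next I would apply \cite[Theorem~3.5]{CDP} to the nearly $S^*$-invariant subspace $\mcl=\smdt$ of $\hdcm$ with this defect space $\fcl=\bigvee_{i=1}^p\{E_i\}$. Writing $\wcl:=\smdt\ominus(\smdt\cap z\hdcm)$, one first notes that evaluation at $0$ is injective on $\wcl$ (if $W\in\wcl$ and $W(0)=0$, then $W\in\smdt\cap z\hdcm$ while $W\perp(\smdt\cap z\hdcm)$, forcing $W=0$), so $r:=\dim\wcl\le m$; let $\{W_1,\ldots,W_r\}$ be an orthonormal basis and $F_0$ the $m\times r$ matrix with columns $W_1,\ldots,W_r$. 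The structural theorem then yields the two alternatives directly: when $\smdt\cap z\hdcm\neq\smdt$ (i.e.\ $\wcl\neq\{0\}$) it gives the representation \eqref{s1}, with $\kcl_\Theta=H^2_{\mathbb{C}^{r+p}}(\mathbb{D})\ominus\Theta\hdcrp$ for an inner $\Theta$ and the isometry $\norm{F}^2=\norm{K_0}^2+\sum_{j=1}^p\norm{k_j}^2$; and when $\smdt\cap z\hdcm=\smdt$ (i.e.\ $\wcl=\{0\}$) it gives \eqref{s3}, with $\kcl_\Phi=H^2_{\mathbb{C}^p}(\mathbb{D})\ominus\Phi H^2_{\mathbb{C}^{p^\prime}}(\mathbb{D})$ and $\norm{F}^2=\sum_{j=1}^p\norm{k_j}^2$. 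This completes the proof.

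The only genuinely delicate step I expect is the middle one: the subspace $\vcl$ furnished by Theorem~\ref{nearly_defect_theorem} need not be orthogonal to $\smdt$, so one must replace it by $\fcl=(\smdt+\vcl)\ominus\smdt$ and check that this neither destroys the absorbing property $S^*(\smdt\cap z\hdcm)\subseteq\smdt\oplus\fcl$ nor raises the dimension past $m$. Everything else is a quotation --- all the functional-analytic content sits inside Theorem~\ref{nearly_defect_theorem} (via the auxiliary operator $K_U$ and Lemmas~\ref{ehu_eku_relation} and~\ref{dim_lemma}) and inside \cite[Theorem~3.5]{CDP}.
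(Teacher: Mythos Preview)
Your proposal is correct and follows essentially the same route as the paper: invoke Theorem~\ref{nearly_defect_theorem} to obtain the nearly $S^*$-invariant property with defect at most $m$, then apply \cite[Theorem~3.5]{CDP} to read off the two cases. The paper's proof is actually terser than yours (it does not spell out the passage from $\vcl$ to the orthogonal defect space $\fcl$, which you handle carefully), and it additionally cites the Beurling--Lax--Halmos theorem \cite[Theorem~3.3]{NaFo70} at the end to identify the $S^*$-invariant parameter space produced by \cite{CDP} with a model space $\kcl_\Theta$ --- a step you fold into the quotation of the structural theorem.
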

\begin{proof}
	Let $s>0$. By the above Theorem \ref{nearly_defect_theorem}, it follows that $\smdt$ is a \textit{nearly $S^*$-invariant} subspace of $\hdcm$, with defect at most $m$. Therefore, for each $s>0$ there exist a non-negative integer $p$ ($p\leq m$) such that $p$ is the dimension of the defect space of $\smdt$. Now by applying C-D-P theorem \cite[Theorem 3.5]{CDP} on $\smdt$, and finally using the \textit{Beurling-Lax-Halmos} characterization \cite[Theorem 3.3]{NaFo70}, we conclude the proof.
\end{proof}
Representation of \textit{Schmidt} subspaces given by Theorem \ref{main_structure_theorem} provides a vectorial generalization of recent known characterization results \cite[Theorem 1.5]{GaPu20} (\cite[Theorem 1.3]{GaPu21}) of Schmidt subspaces in scalar-valued Hardy space, as one can get back the representation of such subspaces in $\hdcc$ by using our results but in a different way. Indeed, we restate \cite[Theorem 1.5]{GaPu20} (\cite[Theorem 1.3]{GaPu21}) and provide a simple alternative short proof again by using Theorem \ref{main_structure_theorem}.
\begin{thm}\label{thmscalar}
	Let $u\in \bmoa(\T)$ and $H_u$ be the corresponding Hankel operator on $\hdcc$ given by \eqref{def1}. Then each non-trivial \textit{Schmidt} subspace $\smdts$, $s>0$, is of the form $h\kcl_{\theta}$, where $\theta$ is an inner function in $\hdcc$ and $h$ is an isometric multiplier on the model space $\kcl_\theta\subseteq\hdcc$.
\end{thm}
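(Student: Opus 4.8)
The plan is to run Theorem~\ref{main_structure_theorem} (and the machinery behind it) in the case $m=1$, and then to push the reduction one further step using the fact that in the scalar setting the operator-valued inner functions occurring there are really scalar. I would first record the elementary remark that $\smdts\cap zH^2=\{F\in\smdts:F(0)=0\}$ has codimension at most one in $\smdts$, so $\wcl:=\smdts\ominus(\smdts\cap zH^2)$ has $\dim\wcl\le 1$. Then I split according to whether or not $\smdts\subseteq zH^2$, which is precisely the dichotomy in Theorem~\ref{main_structure_theorem}.

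In the case $\smdts\not\subseteq zH^2$ the crucial point---and the step I expect to be the main obstacle---is that here $\smdts$ has defect $0$, so that part~\eqref{s1} of Theorem~\ref{main_structure_theorem} applies with $p=0$. Two observations give this. First, $\smdts\not\subseteq zH^2$ forces $u\not\perp\smdts$: if $u\perp\smdts$, then for every $F\in\smdts$ one has $(H_uF)(0)=\langle H_uF,1\rangle=\langle H_u1,F\rangle=\langle u,F\rangle=0$ by Proposition~\ref{Properties_HU2}\eqref{prop1}, and since $s^{-2}H_u$ inverts $H_u$ on the $H_u$-invariant subspace $\smdts$, $H_u$ maps $\smdts$ onto itself, whence $\smdts\subseteq zH^2$. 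Second, for every $F\in\kmdts$ the identity $K_u^2=H_u^2-\langle\cdot,u\rangle u$ gives $(H_u^2-s^2I)F=\langle F,u\rangle u$; since $H_u^2-s^2I$ is self-adjoint (Proposition~\ref{Properties_HU2}\eqref{prop2}), its range is orthogonal to its kernel $\smdts$, so $\langle F,u\rangle u\perp\smdts$, and as $u\not\perp\smdts$ this forces $\langle F,u\rangle=0$. Hence $\kmdts\perp u$, so $H_u^2F=s^2F$ for all $F\in\kmdts$, i.e.\ $\kmdts\subseteq\smdts$; combining this with the inclusion $S^*(\smdts\cap zH^2)\subseteq\kmdts$ from the proof of Theorem~\ref{nearly_defect_theorem} yields $S^*(\smdts\cap zH^2)\subseteq\smdts$, which is the defect-$0$ statement. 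Now Theorem~\ref{main_structure_theorem}\eqref{s1} with $p=0$ and $r=\dim\wcl=1$ reads $\smdts=\{W_1K_0:K_0\in\kcl_\Theta\}$, $\kcl_\Theta=H^2\ominus\Theta H^2_{\C^{r'}}$, $\|F\|=\|K_0\|$; since $\Theta H^2_{\C^{r'}}$ is a shift-invariant subspace of the scalar $H^2$, Beurling's theorem identifies it with $\theta H^2$ for a scalar inner $\theta$, so $\kcl_\Theta=\kcl_\theta$. Here $\theta$ is non-constant, since otherwise $\smdts=W_1H^2$ would be shift-invariant, and then $S^*H_u^2SF=s^2F$ for $F\in\smdts$ combined with $S^*H_u^2S=H_u^2-\langle\cdot,u\rangle u$ would force $u\perp\smdts$, contrary to the case hypothesis. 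Thus $\smdts=W_1\kcl_\theta$, and $W_1$ is an isometric multiplier on $\kcl_\theta$ since $\|W_1K_0\|=\|K_0\|$; this is the asserted form.

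In the case $\smdts\subseteq zH^2$ the subspace cannot have defect $0$: every element of $\smdts$ vanishes at $0$, so defect $0$ would make $\smdts$ an $S^*$-invariant subspace of $zH^2$, and iterating $S^*$-invariance would put each $F\in\smdts$ into $z^nH^2$ for all $n$, i.e.\ $F=0$, contradicting non-triviality. By Theorem~\ref{nearly_defect_theorem} the defect is at most $m=1$, hence exactly $1$, and Theorem~\ref{main_structure_theorem}\eqref{s3} gives $\smdts=\{zk_1E_1:k_1\in\kcl_\Phi\}=(zE_1)\kcl_\Phi$, with $\kcl_\Phi=H^2\ominus\Phi H^2_{\C^{p'}}$ and $\|F\|=\|k_1\|$. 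Once more Beurling's theorem replaces $\Phi H^2_{\C^{p'}}$ by $\phi H^2$ for a scalar inner $\phi$ (non-constant since $\smdts\neq\{0\}$), so $\kcl_\Phi=\kcl_\phi$, and the norm identity says exactly that $h:=zE_1$ is an isometric multiplier on $\kcl_\phi$. Hence $\smdts=h\,\kcl_\phi$, again of the required form, and assembling the two cases completes the proof.

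The genuinely new ingredient is the vanishing of the defect in the first case; it is exactly there that the scalar theory is strictly simpler than the vectorial one, and it uses only the self-adjointness of $H_u^2-s^2I$ and the rank-one correction relating $K_u^2$ to $H_u^2$. The rest---reducing the $\mathcal{L}(\C^{r'},\C)$-valued inner functions produced by the vectorial structure theorem to scalar inner functions via Beurling's theorem, and discarding the few degenerate sub-cases---is routine.
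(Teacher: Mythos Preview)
Your proof is correct and follows the same two-case split as the paper (according as $\smdts\subseteq zH^2$ or not), in each case invoking Theorem~\ref{main_structure_theorem}. The one genuine difference lies in how you obtain defect $0$ in Case~I. The paper fixes $g\in\smdts\ominus(\smdts\cap zH^2)$ and, for $f\in\kmdts$, expands $\langle s^2f,H_u(g)\rangle$ directly using $K_u^2f=H_u^2f-\langle f,u\rangle u$ and the symmetry $\langle H_uF,G\rangle=\langle H_uG,F\rangle$, arriving at $\langle f,u\rangle\langle g,1\rangle=0$ and hence $\langle f,u\rangle=0$. You instead first prove $u\not\perp\smdts$ via the surjectivity of $H_u$ on $\smdts$, and then observe that $(H_u^2-s^2I)f=\langle f,u\rangle u$ lies in the range of the self-adjoint operator $H_u^2-s^2I$, hence is orthogonal to its kernel $\smdts$; combined with $u\not\perp\smdts$ this forces $\langle f,u\rangle=0$. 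Your route is a bit more structural---it isolates exactly which abstract facts (self-adjointness of $H_u^2$, rank-one correction, bijectivity of $H_u$ on the eigenspace) are doing the work---while the paper's computation is shorter but somewhat opaque. Both reach $\kmdts\perp u$ and conclude identically. One cosmetic point: in your first case, the phrase ``$\theta$ is non-constant, since otherwise $\smdts=W_1H^2$'' conflates two distinct degenerations (constant $\theta$ gives $\kcl_\theta=\{0\}$, not $H^2$; the case $\smdts=W_1H^2$ arises when $r'=0$ in Theorem~\ref{main_structure_theorem}), though your subsequent shift-invariance argument correctly rules out the latter.
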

\begin{proof}
	Let $\smdts$ be a non-trivial \textit{Schmidt} subspace of the Hankel operator $H_u$ for some $s>0$. Then by Theorem \ref{main_structure_theorem}, $\smdts$ is a \textit{nearly $S^*$-invariant} subspaces having defect at most $1$. Now we will prove the theorem by analyzing the following two cases.
	
	\textbf{Case I:} Suppose  $~\smdts \nsubseteq z\hdcc$.	In this case our aim is to show that $\smdts$ is nearly $S^*$ invariant with defect $0$, that is just nearly invariant.  To that aim, let $\fcl$ be the defect space having dimension at most $1$. By mimicking the proof of Lemma \ref{dim_lemma}, we conclude that $\fcl = \langle P_{\kmdts}(u)\rangle$. Now for any $f\in \kmdts $, by definition we have $K_u^2(f)=s^2f$, that is $H_u^2(f)-\langle f,u \rangle u = s^2f$. Let $0\neq g\in \smdts \ominus \left(\smdts\cap z\hdcc \right)$. Then
	\begin{align*}
		s^2 \langle f,H_u(g) \rangle &= \langle s^2f,H_u(g) \rangle\\
		& = \langle H_u^2(f),H_u(g) \rangle - \langle f,u \rangle \langle u,H_u(g)\rangle \\
		& = \langle f,H_uH_u^2(g) \rangle - \langle f,u \rangle \langle H_u(1),H_u(g)\rangle \\
		& = s^2 \langle f,H_u(g) \rangle - \langle f,u \rangle \langle H_u^2(g),1\rangle \\
		& = s^2 \langle f,H_u(g) \rangle - s^2\langle f,u \rangle \langle g,1\rangle,
	\end{align*}
	which implies
	\begin{align}\label{s2}
		s^2\langle f,u \rangle \langle g,1\rangle =0.
	\end{align} Since $g(\neq 0)\in \smdts \ominus \left(\smdts\cap z\hdcc \right)$ then $\langle 1,g\rangle \neq 0$, so \eqref{s2} implies that $\langle f,u \rangle =0$. Therefore, under the assumption $~\smdts \nsubseteq z\hdcc$ we get $u \perp \kmdts$ and hence the defect space $\fcl =\{0\}$. Consequently, $\smdts$ is \textit{nearly $S^*$-invariant} and by Theorem \ref{main_structure_theorem} \eqref{s1}, $\smdts = \big\{f : f(z)= f_0(z)K_0(z) : K_0\in {\kcl} \big\}$ where $\kcl = \kcl_\phi \subset \hdcc$ for some inner function $\phi$ together with $\norm{f}^2=\norm{k_0}^2$.
	
	\textbf{Case II:} Suppose $~\smdts \subseteq z\hdcc$. In this case, it is important to note that $\smdt$  is not \textit{nearly $S^*$-invariant} but by Theorem \ref{nearly_defect_theorem}, it is \textit{nearly $S^*$-invariant} with defect $1$.  So, the defect space $\fcl \neq \{0\}$ and by Theorem \ref{main_structure_theorem} \eqref{s3}, $\smdts = \big\{f : f(z)= zk_1(z)E_1(z) : k_1\in {\kcl} \big\}$, where $\{E_1\}$ is a basis of the defect space $\fcl$ and $\kcl = \kcl_\psi \subset \hdcc$ for some inner function $\psi$ together with $\norm{f}^2=\norm{k_1}^2$.
	
	Therefore in both cases, $\smdts = h\kcl_{\theta}$, where $\theta$ is an inner function and $h$ is an isometric multiplier on $\kcl_\theta$.
\end{proof}

Motivating by the representation of $\smdts$ in scalar valued Hardy space $\hdcc$ it is natural to investigate under what circumstances the defect space $\fcl$ will be zero in vector-valued setting. In other words, we would like to ask the following question:
\begin{prob}\label{q2}
	When the \textit{Schmidt} subspaces will become nearly $S^*$-invariant in vector valued Hardy space $\hdcm$ ? 
\end{prob} 

Now it is trivial to check that if $~\smdt \subseteq z\hdcm$ then it cannot be \textit{nearly $S^*$-invariant}. So, we have to investigate under the assumption $\smdt \nsubseteq z\hdcm$. It is clear that if $\{U_1,U_2,\ldots ,U_m\}\perp \kmdt $, then the defect space $\fcl$ becomes trivial and hence $\smdt$ is \textit{nearly $S^*$-invariant} in $\hdcm$. In the following theorem we will provide another important necessary condition which makes $\smdt$ to be \textit{nearly $S^*$-invariant}.
\begin{thm}\label{nearly_inv_thm}
		Let $~\smdt \nsubseteq z\hdcm$. If $$\text{dim}(\smdt \ominus (\smdt \cap z\hdcm))=m,$$ then $\smdt$ is nearly $S^*$-invariant in $\hdcm$ and $\smdt = F_0(\hdcm \ominus \Phi\hdcr$ $)$ for some inner function $\Phi\in H^\infty(\D,\mathcal{L}(\mathbb{C}^{r},\mathbb{C}^m))$, with $\Phi (0) = 0 $ and $F_0$ is a $m\times m$ matrix whose columns are an orthonormal basis of $\smdt \ominus (\smdt \cap z\hdcm)$.
\end{thm}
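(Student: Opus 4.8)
The plan is to reduce Theorem~\ref{nearly_inv_thm} to the already–established machinery by showing that, under the dimension hypothesis, the defect space $\fcl$ of Theorem~\ref{main_structure_theorem} is forced to be trivial; once this is done, nearly $S^*$-invariance (with defect $0$) holds by definition, and the explicit form follows from the Chalendar–Chevrot–Partington description (Theorem~\ref{ccp_thm} together with Corollary~\ref{ccp_corollary}). So the whole argument is really about why $p=0$ here.

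\smallskip
\noindent\textbf{Step 1: set up the relevant subspaces.} Write $\wcl:=\smdt\ominus(\smdt\cap z\hdcm)$, so by hypothesis $\dim\wcl=m$. Recall from \eqref{cont1} in the proof of Theorem~\ref{nearly_defect_theorem} that $S^*(\smdt\cap z\hdcm)\subseteq\kmdt$, and from the decomposition \eqref{exp1},
\[
\kmdt=\bigl(\smdt\cap\{U_1,\ldots,U_m\}^\perp\bigr)\oplus\vcl,\qquad \dim\vcl\le m,
\]
where $\vcl$ is spanned by $\{P_{\kmdt}(U_1),\ldots,P_{\kmdt}(U_m)\}$ by Lemma~\ref{dim_lemma}. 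The defect space of $\smdt$ (the $p$-dimensional $\fcl$ of Theorem~\ref{main_structure_theorem}) lives inside $\vcl$, so it suffices to show $\vcl\subseteq\smdt$; combined with $\vcl\perp(\smdt\cap\{U_i\}^\perp)$ this would give $\vcl\subseteq\wcl$, and since $\dim\wcl=m$ one already has all the room one needs — but really the cleanest route is to show each $P_{\kmdt}(U_i)$ is annihilated, i.e. $U_i\perp\kmdt$.

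\smallskip
\noindent\textbf{Step 2: the orthogonality computation.} This is the heart of the matter and mimics Case~I of the proof of Theorem~\ref{thmscalar}. Since $\smdt\not\subseteq z\hdcm$, pick $0\ne G\in\wcl$; then $G(0)\ne 0$, so the $\C^m$-vector $G(0)$ is nonzero, and because $\dim\wcl=m$ we can in fact choose $G_1,\ldots,G_m\in\wcl$ whose values $G_1(0),\ldots,G_m(0)$ span $\C^m$ (if the evaluation map $\wcl\to\C^m$, $G\mapsto G(0)$ had nontrivial kernel, some $0\ne G\in\wcl$ would lie in $z\hdcm\cap\smdt$, contradicting $G\in\wcl$; so it is an isomorphism). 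For $f\in\kmdt$ we have $K_U^2 f=s^2 f$, i.e. $H_U^2 f-\sum_{i=1}^m\langle f,U_i\rangle U_i=s^2 f$. Now compute $s^2\langle f,H_U(G)\rangle$ two ways using Proposition~\ref{Properties_HU2}\eqref{prop1} (the symmetry $\langle H_U(\cdot),\cdot\rangle=\langle H_U(\cdot),\cdot\rangle$), the identities $H_U(e_i)=U_i$, $G\in\smdt$, and $H_U^3=s^2 H_U$ on $\smdt$; exactly as in the scalar case this collapses to
\[
\sum_{i=1}^m \langle f,U_i\rangle\,\langle G,e_i\rangle \;=\;0,
\]
i.e. $\langle f,U_i\rangle\,\overline{G(0)_i}$ summed over $i$ vanishes, which is $\langle (\langle f,U_1\rangle,\ldots,\langle f,U_m\rangle),\,G(0)\rangle_{\C^m}=0$. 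Running this over $G_1,\ldots,G_m$ and using that their values span $\C^m$ forces $\langle f,U_i\rangle=0$ for all $i$. Hence $U_i\perp\kmdt$ for every $i$, so $P_{\kmdt}(U_i)=0$, so $\vcl=\{0\}$ by Lemma~\ref{dim_lemma}, and therefore the defect space $\fcl=\{0\}$, i.e. $p=0$.

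\smallskip
\noindent\textbf{Step 3: conclude.} With $p=0$, the inclusion \eqref{final} becomes $S^*(\smdt\cap z\hdcm)\subseteq\smdt$, which is precisely the statement that $\smdt$ is nearly $S^*$-invariant. Apply Theorem~\ref{ccp_thm} and Corollary~\ref{ccp_corollary} with $\fcl=\smdt$: taking $F_0$ to be the $m\times r$ matrix whose columns are an orthonormal basis $W_1,\ldots,W_r$ of $\wcl=\smdt\ominus(\smdt\cap z\hdcm)$ — and here $r=m$ by hypothesis, so $F_0$ is square $m\times m$ — we get $\smdt=F_0(\hdcm\ominus\Phi\hdcr)$ for an inner $\Phi\in H^\infty(\D,\mathcal{L}(\C^r,\C^m))$ with $\Phi(0)=0$, which is the assertion (with the $r'$ of the corollary renamed $r$). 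The isometry $F_0 G\mapsto G$ from Theorem~\ref{ccp_thm} records that the norm is preserved, matching the representations in Theorem~\ref{main_structure_theorem}\eqref{s1} with $p=0$.

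\smallskip
\noindent\textbf{Main obstacle.} The delicate point is Step~2: making sure the pairing computation genuinely closes up in the matrix setting. In the scalar proof one exploits $H_u(1)=u$ and a single test vector $g$ with $\langle 1,g\rangle\ne 0$; here one must be careful that (a) the analogue $H_U(e_i)=U_i$ is used correctly inside the non-commutative bilinear form, (b) $H_U^3=s^2H_U$ on $\smdt$ is legitimately invoked (it follows since $H_U^2$ commutes with itself and $G\in\ker(H_U^2-s^2I)$), and (c) the family $\{G_j\}$ can be chosen with $\{G_j(0)\}$ spanning $\C^m$ — which is exactly where the hypothesis $\dim\wcl=m$ enters and is indispensable. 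If $\dim\wcl<m$ the evaluation-at-$0$ map on $\wcl$ need not be surjective and one can only kill the components of the $U_i$ along $\mathrm{span}\{G_j(0)\}$, leaving a possibly nonzero defect — consistent with the fact that the conclusion genuinely fails in general.
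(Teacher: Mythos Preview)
Your proof is correct and follows essentially the same route as the paper's: the core computation in Step~2 (pairing $f\in\kmdt$ against $H_U(G)$ for $G\in\wcl$ to obtain $\sum_i\langle f,U_i\rangle\langle G,e_i\rangle=0$) matches the paper's identity \eqref{inv_eq}, and your observation that the evaluation map $\wcl\to\C^m$ is an isomorphism is exactly the paper's linear-independence argument for $\{W_1(0),\ldots,W_m(0)\}$, after which both proofs invoke Corollary~\ref{ccp_corollary}. One tiny slip: $\langle G,e_i\rangle$ equals $G(0)_i$, not $\overline{G(0)_i}$, but since the $G_j(0)$ span $\C^m$ the conclusion $\langle f,U_i\rangle=0$ follows regardless.
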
 
\begin{proof}
	We consider an element $F\in \smdt$ with $F(0)=0$. Then it follows from the set inclusion \eqref{final}  that  $S^*(F)\in \left(\smdt \cap \{U_1,U_2,\ldots ,U_m\}^\perp \right) \oplus  \vcl $, where 
	$$\vcl=\, \kmdt \ominus \left\{\kmdt \cap \{U_1,U_2,\ldots ,U_m\}^\perp \right\}. $$ Note that $\vcl$ is generated by $\left\{ P_{\kmdt}(U_1),P_{\kmdt}(U_2),\ldots ,P_{\kmdt}(U_m)  \right\}$.  
	
	By hypothesis, $\text{dim}(\smdt \ominus (\smdt \cap z\hdcm))=m $. Let $\{W_1,W_2,\ldots ,$ $W_m\}$ to an orthonormal basis of $\vcl$. Then $\{W_1(0),W_2(0),\ldots ,W_m(0)\}$ is linearly independent in $\C^m$. Indeed, suppose for $\alpha_1, \alpha_2,\ldots ,\alpha_m \in \C$,
	\begin{align*}
		\alpha_1 W_1(0) +\alpha_2W_2(0)+\cdots +\alpha_mW_m(0)=0.	
	\end{align*}
	Let $G= \alpha_1 W_1 +\alpha_2W_2+\cdots +\alpha_mW_m$, then $G\in \smdt \cap z\hdcm$. So, for each $i\in \{1,2,\ldots ,m\}$, we have $\langle G,W_i\rangle = 0$ which implies $\alpha_i =0$. 
	
	Take $J\in \kmdt$ then $K_U^2(J)={H^2_U}(J) - \sum\limits_{i=1}^{m} \left \langle J, U_i \right \rangle Ui =s^2 J$. Note that $H_U(\smdt) \subset \smdt$. Therefore for any $r\in \{1,2,\ldots ,m \}$ we have,
	\begin{align*}
		s^2\left\langle J,H_U(W_r)\right\rangle & = \left\langle H_{U}^2(J), H_U(W_r) \right\rangle - \sum_{i=1}^{m} \left \langle J, U_i \right \rangle \left\langle U_i, H_U(W_r) \right\rangle \\
		& = \left\langle J, H_UH_{U}^2(W_r) \right\rangle - \sum_{i=1}^{m} \left \langle J, U_i \right \rangle \left\langle H_U(e_i), H_U(W_r) \right\rangle \\
		& = s^2\left\langle J, H_U(W_r) \right\rangle - \sum_{i=1}^{m} \left \langle J, U_i \right \rangle \left\langle  H^2_U(W_r),e_i \right\rangle \\
		& = s^2\left\langle J, H_U(W_r) \right\rangle - s^2\sum_{i=1}^{m} \left \langle J, U_i \right \rangle \left\langle  W_r,e_i \right\rangle,
	\end{align*}
	where $\{e_1,e_2,\ldots ,e_m\}$ is the standard orthonormal basis of $\C^m$. So, we obtain \begin{equation}\label{inv_eq}
		\sum_{i=1}^{m} \left \langle J, U_i \right \rangle \left\langle  W_r,e_i \right\rangle = 0,\quad r\in \{1,2,\ldots ,m \}.
	\end{equation}
	Let $W$ be an $m\times m$ matrix whose columns are $W_1,W_2,\ldots ,W_m$. Therefore $W(0)= [W_1(0),W_2(0),$ $\ldots ,W_m(0)]_{m\times m}$ is an invertible matrix in $M_{m}(\C)$. Now the equations derived in \eqref{inv_eq} can be rewritten as 
	\begin{align*}
		[W(0)]{^t}_{m\times m}[\langle J,U_1 \rangle , \langle J,U_2 \rangle ,\cdots ,\langle J,U_m \rangle ]_{m\times 1}^t  = 0,
	\end{align*}
	which implies 
	\begin{align*}
		[\langle J,U_1 \rangle , \langle J,U_2 \rangle ,\cdots ,\langle J,U_m \rangle ]_{m\times 1}^t  = 0.	
	\end{align*}
	So, we have $\langle J,U_i \rangle = 0 , \forall i\in \{1,2,\ldots ,m\}$ and hence $J\perp \{U_1,U_2,\ldots ,U_m\}$ for $J\in \kmdt$. Therefore, $\kmdt \perp \{U_1,U_2,\ldots ,U_m\}$, which further implies $\vcl =\{0\}$. In other words, $\smdt$ is \textit{nearly $S^*$-invariant} in $\hdcm$ and 
	\begin{equation}\label{importantnew}
		S^*(\smdt \cap {\C^m}^\perp) \subset \smdt \cap \{U_1,U_2,\ldots ,U_m\}^\perp .
	\end{equation}
	Therefore, by Corollary \ref{ccp_corollary}, there exists an inner function $\Phi\in H^\infty(\D,\mathcal{L}(\mathbb{C}^{r},\mathbb{C}^m))$, with $\Phi (0) = 0 $ such that $\smdt = F_0(\hdcm \ominus \Phi\hdcr)$, where $F_0 = [W_1,W_2,$ $\ldots ,W_m]_{m \times m}$.  
\end{proof}
\noindent In the above Theorem \ref{nearly_inv_thm}, the assumption $\text{dim}(\smdt \ominus (\smdt \cap z\hdcm))=m$ is quite natural, since in the scalar-valued case if   $\smdts \nsubseteq z\hdcc$, then we must have $$dim(\smdts \ominus \left(\smdts\cap z\hdcc \right))=1.$$ 
In the vector-valued Hardy space, there are plenty of examples of \textit{Schmidt} subspaces satisfying  the above assumption. Indeed, in particular, if we are in $H^2_{\C^2}(\D)$, then the following examples serve our purpose.
\begin{xmpl}\label{xmpl3.6}
	Let $\phi \in \mathcal{H}^\infty (\D,\C)$  be an inner function, and consider
	\begin{enumerate}[(A)]
		\item  
		\begin{equation*}
			U=
			\begin{bmatrix}  
				\phi & 0 \\[1pt]
				0 & \phi
			\end{bmatrix}, 
			\hspace{0.5cm} \textit{then} \hspace{0.5cm}
			H_U ^2=
			\begin{bmatrix}  
				H_{\phi}^2 & 0 \\[1pt]
				0 & H_{\phi}^2
			\end{bmatrix}. 
		\end{equation*}
		Therefore the \textit{Schmidt} subspace $\smdt =\ker(H_U ^2 -s^2 I) = \ker(H_{\phi}^2 -s^2I) \oplus \ker(H_{\phi}^2 -s^2I)$.\\
		\item 
		\begin{equation*}
			U=
			\begin{bmatrix}  
				0 & \phi \\[1pt]
				\phi & 0
			\end{bmatrix}, 
			\hspace{0.5cm} \textit{then} \hspace{0.5cm}
			H_U ^2=
			\begin{bmatrix}  
				H_{\phi}^2 & 0 \\[1pt]
				0 & H_{\phi}^2
			\end{bmatrix}. 
		\end{equation*}
		In this case also $\smdt =\ker(H_U ^2 -s^2 I) = \ker(H_{\phi}^2 -s^2I) \oplus \ker(H_{\phi}^2 -s^2I)$.
	\end{enumerate}
	Note that, in both cases $\text{dim}(\smdt \ominus (\smdt \cap zH^2_{C^2}(\D)))= 2$.
\end{xmpl}

In the next section, we will provide some non-trivial examples of class of \textit{Schmidt} subspaces satisfying such assumption (see Example~\ref{lastexample}). 
We end the section with the following remark:  
\begin{rmrk}
	As mentioned in [\cite{GaPu20}, Appendix], one can have similar results regarding the linear Hankel operator in vector-valued setting as well. Suppose $J$ is the linear involution in $L^2(\T , \C^m)$ defined as $$JF(z)=(f_1(\bar{z}),f_2(\bar{z}),\ldots ,f_m(\bar{z})),  {~~\textit{where}~~} F=(f_1,f_2,\ldots ,f_m) ~\textit{and}~ z\in \T.$$ Let $C$ be the anti-linear involution (in fact a conjugation) in $\hdcm$ such that $CF(z)= \overline{F(\bar{z})}$, then for $U\in \bmoa(\T,\mathcal{L}(\C^m))$ we can similarly define the linear Hankel operator $G_U$ in $\hdcm$ by $$G_U(F) = P_m(UJ{F}).$$
	Then, we have $G_U = H_U C$ and $G_U^* = C H_U$, and therefore using our description\ref{main_structure_theorem}, one can have the precise structure of $\ker(G_U^*G_U -s^2I)$ and $\ker(G_U G_U^* -s^2I)$ in $\hdcm$. Note that, $$\ker(G_U^*G_U -s^2I)= C \smdt \hspace{0.5cm} \textit{and} \hspace{0.5cm} \ker(G_U G_U^* -s^2I) = \smdt .$$
\end{rmrk}

\section{The action of $H_U$ on $\smdt \equiv F_0\kcl_{\Theta}$}\label{action}
The \textit{Schmidt} subspaces $\smdt$ of the Hankel operator $H_U$ remain invariant under the operator $H_u$ in $\hdcm$. In \cite{GaPu20}\cite{GaPu21} the authors have discussed the explicit formula for the action of $H_u$ on these invariant $\smdts \equiv p\kcl_\theta$ in terms of the parameters $s,p$ and $\theta$ only. Also we have noted that in scalar valued Hardy space $\hdcc$ for any non-trivial \textit{Schmidt} subspace $\smdts$ it automatically holds that $\textit{dim}\left(\smdts \ominus (\smdts \cap z\hdcc)\right) =1$ when $\smdts \nsubseteq z\hdcc$. But in case of vector valued Hardy space $\hdcm$, when $\smdt \nsubseteq z\hdcm$, in general we have 
$$1\leq \text{dim}(\smdt \ominus (\smdt \cap z\hdcm)) \leq m $$ 
for any non-zero subspace $\smdt$. Under the assumption $\text{dim}(\smdt \ominus (\smdt$ $ \cap z\hdcm))=m$, in Theorem \ref{nearly_inv_thm}, we have seen that $~\smdt$ has compact form $\smdt = F_0\kcl_{\Theta}$ due to its \textit{nearly $S^*$-invariant} property. In this section we will obtain an explicit formula for the action of $H_u$ on $\smdt = F_0\kcl_{\Theta}$ under the assumption $\text{dim}(\smdt \ominus (\smdt \cap z\hdcm))=m$. Before going to the  main result of this section, we need some useful lemmas.

\begin{lma}\label{theta_a_sym}
	Let $\Theta \in H^\infty(\D,\mathcal{L}(\mathbb{C}^{m},\mathbb{C}^m))$ be an inner function with $\Theta (0)=0$ and $A\in M_m(\C)$ an unitary constant matrix such that $\Theta A$ is symmetric. Then  $S^*{\Theta} (A\bar{G})\in \kcl_\Theta$, for any $G \in \kcl_\Theta $.
\end{lma}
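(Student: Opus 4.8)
The plan is to pass from $\Theta$ to the symmetric inner function $M:=\Theta A$ and reduce the statement to a short computation inside $\kcl_M$. First I would record the elementary facts. Since $\Theta$ is an $m\times m$ inner function, $\Theta(z)$ is unitary for a.e. $z\in\T$; as $A$ is a constant unitary, $M(z)=\Theta(z)A$ is unitary for a.e. $z\in\T$, so $M$ is again an inner function in $H^\infty(\D,\mathcal{L}(\C^m,\C^m))$ with $M^*M=MM^*=I_{\C^m}$ a.e.\ on $\T$. Because $A$ is an invertible constant, $A\hdcm=\hdcm$, hence $\Theta A\hdcm=\Theta\hdcm$ and therefore $\kcl_M=\hdcm\ominus M\hdcm=\hdcm\ominus\Theta\hdcm=\kcl_\Theta$. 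By hypothesis $M^t=(\Theta A)^t=\Theta A=M$.

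Next, fix $G\in\kcl_\Theta=\kcl_M$ and view it through its boundary values in $L^2(\T,\C^m)$. Since $G\perp M\hdcm$, for every $F\in\hdcm$ we have $\langle M^*G,F\rangle_{L^2}=\langle G,MF\rangle_{L^2}=0$, so $M^*G\perp\hdcm$ in $L^2(\T,\C^m)$; equivalently all Fourier coefficients of $M^*G$ of non-negative index vanish. Conjugating entrywise, and using that conjugation is multiplicative together with $\overline{M^*}=M^t=M$ by symmetry, gives
\[
\Theta(A\bar G)=(\Theta A)\bar G=M\bar G=\overline{M^*G},
\]
so the Fourier coefficients of $M\bar G$ of non-positive index vanish, i.e.\ $M\bar G\in z\hdcm\subseteq\hdcm$. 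In particular $S^*\Theta(A\bar G)$ is defined, and since $M\bar G$ vanishes at the origin,
\[
K:=S^*\Theta(A\bar G)=S^*(M\bar G)=\bar z\,M\bar G\in\hdcm.
\]

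Finally I would verify $K\in\kcl_M$. We already have $K\in\hdcm$. Using $M^*M=I$ a.e.\ on $\T$ and that the scalar $\bar z$ commutes with everything,
\[
M^*K=M^*(\bar z\,M\bar G)=\bar z\,\bar G ,
\]
and $\bar z\,\bar G$ has only Fourier coefficients of index $\le-1$ because $G\in\hdcm$; hence $M^*K\perp\hdcm$ in $L^2(\T,\C^m)$, i.e.\ $\langle K,MF\rangle=0$ for all $F\in\hdcm$, so $K\perp M\hdcm$. Therefore $K\in\hdcm\ominus M\hdcm=\kcl_M=\kcl_\Theta$, which is the assertion.

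The only place requiring care is the middle step: the orthogonality $G\perp M\hdcm$ naturally produces the conjugate transpose $M^*$, whereas to recognize $\Theta(A\bar G)$ as $M\bar G$ and to simplify $M^*K$ at the end one needs the entrywise conjugate $\bar M$ (equivalently $M^t$); these coincide precisely because $M=\Theta A$ is symmetric, which is exactly where that hypothesis enters. I do not expect a genuine obstacle beyond keeping the matrix-function bookkeeping — conjugate versus transpose versus adjoint, and the identification of $(\hdcm)^\perp$ inside $L^2(\T,\C^m)$ — straight; the hypothesis $\Theta(0)=0$ is inherited from the ambient setting but is not actually used here.
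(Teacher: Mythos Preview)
Your argument is correct and is essentially the same as the paper's proof, just repackaged: you introduce $M=\Theta A$ and argue via Fourier supports, whereas the paper verifies the two orthogonality conditions $S^*\Theta(A\bar G)\perp\overline{z\hdcm}$ and $S^*\Theta(A\bar G)\perp\Theta\hdcm$ by direct inner-product computations using $(\Theta A)^t=\Theta A$ and $\Theta^*\Theta=I$. Your extra observations that $\kcl_M=\kcl_\Theta$ and that the hypothesis $\Theta(0)=0$ is not actually used are both correct and make the argument slightly cleaner than the paper's.
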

\begin{proof}
	First note that $\Theta A$ is also an inner function in $H^\infty(\D,\mathcal{L}(\mathbb{C}^{m},\mathbb{C}^m))$. Since, $\Theta A$ is symmetric we have $(\Theta A)^t = \Theta A$. For any $F\in \hdcm$, we get
	\begin{align*}
		\left\langle S^*\Theta (A\bar{G}), \overline{zF}\right\rangle_{L^2} & = \left\langle \Theta (A\bar{G}), \bar{F}\right\rangle_{L^2} \\
		& = \left\langle (\Theta A)^t F, G\right\rangle_{H^2} \\
		& = \left\langle \Theta A F, G\right\rangle_{H^2} =0 .
	\end{align*}
	Also, 
	\begin{align*}
		\left\langle S^*\Theta (A\bar{G}), \Theta H \right\rangle = \left\langle  A\bar{G}, z H \right\rangle = \left\langle  A\bar{G}, z H \right\rangle = \left\langle  \bar{G}, z A^*H \right\rangle = 0,
	\end{align*}
	for any $H\in \hdcm$. Hence, $S^*{\Theta} (A\bar{G})\in \kcl_\Theta$.
\end{proof}

\begin{lma}\label{k_theta_cm}
	For an inner function $\Theta \in H^\infty(\D,\mathcal{L}(\mathbb{C}^{r},\mathbb{C}^m))$,
	\begin{equation*}
		S^*\left(\kcl_{\Theta} \cap {\C^m}^{\perp} \right) = \kcl_{\Theta} \cap \left\{\bigvee_{i=1}^r S^*(\Theta e_i) \right\}^{\perp},
	\end{equation*}
	where $\left\{e_i\right\}_{i=1}^{r}$ is the standard orthonormal basis of $\C^r$.
\end{lma}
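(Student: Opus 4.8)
The plan is to establish the asserted identity by proving the two inclusions separately, using only two standard facts together with the identification of $\C^m$ with the constant functions in $\hdcm$: first, that $\Theta\hdcr$ is invariant under $S$ (multiplication by $z$), so that $\kcl_\Theta=\hdcm\ominus\Theta\hdcr$ is $S^*$-invariant; and second, that under the embedding of $\C^m$ into $\hdcm$ as constants one has ${\C^m}^\perp=z\hdcm$, so that $\kcl_\Theta\cap{\C^m}^\perp$ is exactly the set of $F\in\kcl_\Theta$ with $F(0)=0$. For any such $F$ we can write $F=SG$ with $G=S^*F\in\hdcm$, and since $\kcl_\Theta$ is $S^*$-invariant we already know $G\in\kcl_\Theta$; conversely $S^*$ maps $z\hdcm$ bijectively onto $\hdcm$, so the statement is really a description of the image set $S^*(\kcl_\Theta\cap{\C^m}^\perp)$.

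For the inclusion $\subseteq$, I would fix $F\in\kcl_\Theta$ with $F(0)=0$ and show $G=S^*F$ is orthogonal to each $S^*(\Theta e_i)$. Using the identity $SS^*=I_{\hdcm}-P_{\C^m}$ recorded in Section~\ref{prno},
\[
\langle S^*F,\,S^*(\Theta e_i)\rangle=\langle SS^*F,\,\Theta e_i\rangle=\langle F-F(0),\,\Theta e_i\rangle=\langle F,\,\Theta e_i\rangle,
\]
where $F(0)=0$ is used in the last step and $F(0)$ stands for the corresponding constant function. This inner product vanishes because $\Theta e_i\in\Theta\hdcr$ while $F\in\kcl_\Theta=\hdcm\ominus\Theta\hdcr$. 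Hence $G\in\kcl_\Theta\cap\{\bigvee_{i=1}^r S^*(\Theta e_i)\}^\perp$, giving one inclusion.

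For the inclusion $\supseteq$, I would take $G\in\kcl_\Theta$ with $G\perp S^*(\Theta e_i)$ for all $i$, set $F=SG=zG$ (so that $F(0)=0$ and $S^*F=G$), and verify $F\in\kcl_\Theta$. Since $\{z^k e_i:k\ge 0,\ 1\le i\le r\}$ is an orthonormal basis of $\hdcr$ and multiplication by the inner function $\Theta$ is an isometry, $\Theta\hdcr$ is the closed linear span of the vectors $z^k\Theta e_i$, so it suffices to check $\langle F,z^k\Theta e_i\rangle=0$ for every $k\ge 0$ and every $i$. For $k\ge 1$ one has $\langle zG,z^k\Theta e_i\rangle=\langle G,z^{k-1}\Theta e_i\rangle=0$ since $z^{k-1}\Theta e_i\in\Theta\hdcr\perp G$; for $k=0$ one has $\langle zG,\Theta e_i\rangle=\langle G,S^*(\Theta e_i)\rangle=0$ by the hypothesis on $G$. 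Therefore $F\in\kcl_\Theta\cap{\C^m}^\perp$ with $G=S^*F$, which proves the reverse inclusion and completes the argument.

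All the computations here are elementary; the only point requiring a little care is the bookkeeping in the last step that separates the boundary term at $k=0$ from the terms with $k\ge 1$. That boundary term is exactly what enforces the additional orthogonality to $\bigvee_{i=1}^r S^*(\Theta e_i)$, and is the reason the two model-type subspaces appearing in the statement differ. Beyond keeping the identification ${\C^m}^\perp=z\hdcm$ and the adjoint relation $\langle SG,\cdot\,\rangle=\langle G,S^*\cdot\,\rangle$ straight, I do not foresee any real obstacle.
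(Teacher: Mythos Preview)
Your proof is correct and follows essentially the same approach as the paper: both arguments establish the two inclusions separately, using that $S$ is an isometry (so $\langle SG,\cdot\rangle=\langle G,S^*\cdot\rangle$) together with the $S^*$-invariance of $\kcl_\Theta$, and both isolate the constant term to handle the boundary case. The only cosmetic difference is that for the reverse inclusion you expand against the basis $\{z^k\Theta e_i\}$ of $\Theta\hdcr$, while the paper writes an arbitrary $F_1\in\hdcr$ as $F_1(0)+SF_2$ and tests $\langle SG_1,\Theta F_1\rangle$ directly; these are equivalent bookkeeping choices.
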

\begin{proof}
	Since $S$ is an isometry on $\hdcm$ then it is sufficient to proof
	\begin{equation*}
		\kcl_{\Theta} \cap {\C^m}^{\perp} = S\left [ \kcl_{\Theta} \cap \left\{\bigvee_{i=1}^r S^*(\Theta e_i) \right\}^{\perp} \right ].
	\end{equation*}
	Let $F\in \kcl_{\Theta} \cap {\C^m}^{\perp}$, then $F= SG$ for some $G\in \hdcm$. Therefore $G=S^*F$ and hence $G\in 	\kcl_{\Theta}$. Also for each $i\in \{1,2,\ldots,r\}$, 
	\begin{align*}
		\left \langle G, S^*(\Theta e_i)\right \rangle = \left \langle SG, \Theta e_i\right \rangle = \left \langle F, \Theta e_i\right \rangle =0,
	\end{align*}
	which implies that $G \in \left\{\bigvee_{i=1}^r S^*(\Theta e_i) \right\}^{\perp}$, and it further implies $$F \in S\left[\kcl_{\Theta} \cap \left\{\bigvee_{i=1}^r S^*(\Theta e_i) \right\}^{\perp} \right].$$

	Let  $F_1\in \hdcr$, then $F_1 = F_1(0)+SF_2$ for some $F_2 \in  \hdcr$. Again for any $G_1 \in \kcl_{\Theta} \cap \left\{\bigvee_{i=1}^r S^*(\Theta e_i) \right\}^{\perp}$ we have 
	\begin{align*}
		\left \langle SG_1,\Theta F_1\right \rangle & = 	\left \langle SG_1,\Theta (F_1(0)+SF_2) \right \rangle \\
		& = \left \langle SG_1,\Theta F_1(0)\right \rangle + \left \langle SG_1,S\Theta F_2 \right \rangle \\
		& = \left \langle SG_1,\Theta \left(\sum_{i=1}^{r}\langle F,e_i\rangle e_i\right)\right \rangle + \left \langle SG_1,S\Theta F_2 \right \rangle \\
		& = \sum_{i=1}^{r}\langle e_i, F\rangle \left \langle G_1,S^*\Theta e_i\right \rangle  + \left \langle G_1,\Theta F_2 \right \rangle =0,
	\end{align*}
	which proves that $SG_{1} \in \kcl_{\Theta} \cap {\C^m}^{\perp}$ and hence $$S^*\left(\kcl_{\Theta} \cap {\C^m}^{\perp} \right) = \kcl_{\Theta} \cap \left\{\bigvee_{i=1}^r S^*(\Theta e_i) \right\}^{\perp}.$$
\end{proof}
\begin{hypo}\label{dim}
	Let $U$ satisfies  Hypothesis \ref{usym}, $s>0$, and the \textit{Schmidt} subspace $\smdt$ of the Hankel operator $H_U$ satisfies the condition $\text{dim}(\smdt \ominus (\smdt \cap z\hdcm))=m$.
\end{hypo}
Now we are in a position to state the main theorem in this section regarding the action of the Hankel operator $H_U$ on $\smdt$.

\begin{thm}\label{action_thm}
	Let $U$ and $\smdt$ satisfy the Hypothesis \ref{dim}. Let $\{W_1,W_2,\ldots ,$ $W_m\} $ be an orthonormal basis of $~ \wcl:=\smdt \ominus (\smdt \cap z\hdcm)$. Then $\smdt$ has the following representation: $$\smdt = F_0\kcl_{\Theta},$$ where $F_0=[W_1,W_2,\ldots ,W_m]_{m\times m}$ and $\kcl_\Theta = \hdcm \ominus \Theta \hdcm$ for some inner function $\Theta \in H^\infty(\D,\mathcal{L}(\mathbb{C}^{m},\mathbb{C}^m))$ with $\Theta (0)=0$. Moreover, for any $G\in \kcl_\Theta$, there exists an unitary constant matrix $A\in M_m(\C)$ such that the action of $H_U$ on $\smdt$ is given by
	\begin{equation}\label{formula4.1}
		H_U(F_0G) = s\,\mathbb{P}_mF_0 \left[ S^*\Theta (A\bar{G}) \right], \hspace{0.5cm} G\in \kcl_{\Theta}.
	\end{equation}
\end{thm}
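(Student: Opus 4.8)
The plan is to combine the structural representation $\smdt = F_0 \kcl_\Theta$ from Theorem~\ref{nearly_inv_thm} with the algebraic identity $K_U = H_U S = S^* H_U$ and the intertwining relation $H_U S = S^* H_U$, tracking carefully how $H_U$ acts on the generating matrix $F_0$. First I would record that under Hypothesis~\ref{dim} the hypotheses of Theorem~\ref{nearly_inv_thm} hold, so $\smdt = F_0(\hdcm \ominus \Phi \hdcm)$ for some inner $\Phi$ vanishing at $0$; write $\Theta := \Phi$, so $\kcl_\Theta = \hdcm \ominus \Theta\hdcm$ and $F_0 = [W_1,\ldots,W_m]$ with columns an orthonormal basis of $\wcl$. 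The key preliminary observation is that $H_U$ restricted to $\smdt$ is, up to the factor $s$, an isometry with a conjugate-linear character: from Proposition~\ref{Properties_HU2}\eqref{prop1} and $H_U^2 = s^2 I$ on $\smdt$ one gets $\|H_U F\| = s\|F\|$ and $\langle H_U F, H_U G\rangle = s^2 \langle G, F\rangle$ for $F,G \in \smdt$. Hence $\tfrac1s H_U$ maps $\smdt$ onto $\smdt$ conjugate-isometrically.

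Next I would translate this to the coordinate $\kcl_\Theta$ via the isometric multiplier $G \mapsto F_0 G$. Define $V : \kcl_\Theta \to \kcl_\Theta$ by $V(\bar G) := \tfrac1s \mathbb{P}_m^{-1}\!\big(\text{coordinate of } H_U(F_0 G)\big)$ — more precisely, since $\tfrac1s H_U(F_0 G) \in \smdt = F_0\kcl_\Theta$ and $J : F_0 H \mapsto H$ is isometric (Theorem~\ref{ccp_thm}), we may set $V\bar G := J\big(\tfrac1s H_U(F_0 G)\big) \in \kcl_\Theta$. Then $V$ is a conjugate-linear isometry of $\kcl_\Theta$ onto itself. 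The heart of the argument is to identify $V$ with the map $\bar G \mapsto S^*\Theta(A\bar G)$ for a suitable constant unitary $A$. I would first check that $\bar G \mapsto S^*\Theta(A\bar G)$ does land in $\kcl_\Theta$: this is exactly Lemma~\ref{theta_a_sym} once we know $\Theta A$ is symmetric, and it is a conjugate-linear isometry of $\kcl_\Theta$ since $\Theta$ is inner, $\Theta(0)=0$, and $A$ is unitary (a short computation with $\|S^*\Theta(A\bar G)\|^2 = \|\Theta A\bar G\|^2 - \|(\Theta A\bar G)(0)\|^2 = \|\bar G\|^2$, using $\Theta(0)=0$ and $G \in \kcl_\Theta \subseteq \C^{m\perp}$... actually using that $\bar G \perp$ the relevant constants — this bookkeeping via Lemma~\ref{k_theta_cm} is where care is needed).

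To pin down $A$ and prove $V\bar G = S^*\Theta(A\bar G)$, I would use the shift relations. The identity $H_U S = S^* H_U$ (Proposition~\ref{Properties_HU2}\eqref{prop3}) together with the description of $S^*$ acting on $\smdt \cap z\hdcm$ from \eqref{importantnew} lets one relate $H_U$ on $\smdt$ to $H_U$ on $\kmdt$, and then express everything through $\Theta$: for $G \in \kcl_\Theta$ with $G(0)=0$ one writes $F_0 G = S(\text{something})$, applies $H_U S = S^* H_U$, and matches with $S^*\Theta$ acting on the conjugate. The constant matrix $A$ emerges from comparing the "top parts" $\wcl = F_0 \C^m$, i.e.\ from the action of $H_U$ on the columns $W_1,\ldots,W_m$ themselves: since $H_U(F_0 e_i) \in \smdt$, expanding in the $F_0$-coordinate produces a constant matrix, and $H_U^2 = s^2 I$ forces it (after the $S^*\Theta$ normalization) to be unitary; symmetry of $\Theta A$ then follows from Proposition~\ref{Properties_HU2}\eqref{prop1} (the "$\langle H_U F, G\rangle = \langle H_U G, F\rangle$" symmetry) applied to pairs $F_0 G, F_0 H$.

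The main obstacle I anticipate is the last identification step: showing that the conjugate-linear isometry $V$ of $\kcl_\Theta$ is \emph{exactly} $\bar G \mapsto S^*\Theta(A\bar G)$ with $A$ constant and unitary, rather than merely of some similar shape. This requires genuinely using that $H_U$ is a Hankel operator (the intertwining $H_US = S^*H_U$ and the finite-rank defect formula $K_U^2 = H_U^2 - \sum_i \langle\cdot,U_i\rangle U_i$), not just that it is a conjugate-linear involution up to scaling — purely from the isometry property one could only conclude $V$ is some conjugate-linear unitary. Concretely, the delicate point is verifying that the "error" between $V$ and $S^*\Theta(A\,\overline{\,\cdot\,})$ is killed by the Hankel structure on the whole of $\kcl_\Theta$, which I expect to do by first establishing the formula on the dense-enough piece $\{G \in \kcl_\Theta : G(0) = 0\}$ using $H_U S = S^* H_U$ and Lemma~\ref{k_theta_cm}, and then extending to all of $\kcl_\Theta$ by continuity together with the $m$-dimensional top part handled via the constant matrix $A$.
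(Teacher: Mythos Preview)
Your proposal has the right conceptual frame---pulling $\tfrac1sH_U$ back through the isometry $J$ to a conjugate-linear isometry $V$ on $\kcl_\Theta$---but the identification step has two concrete problems. First, you aim to prove $V\bar G=S^*\Theta(A\bar G)$ as an identity in $\kcl_\Theta$, invoking Lemma~\ref{theta_a_sym}. That lemma requires $\Theta A$ to be symmetric, which is \emph{not} established anywhere; the paper only shows in a later remark that $(S^*\Theta A)(0)$ is symmetric. This is precisely why formula~\eqref{formula4.1} carries the projection $\mathbb{P}_m$: in general $F_0[S^*\Theta(A\bar G)]$ need not lie in $\hdcm$. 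Your target equation is stronger than the theorem, and you have no mechanism to get the extra symmetry. Second, your order of attack---first the piece $\{G:G(0)=0\}$ via $H_US=S^*H_U$, then the constants---is inverted. The intertwining gives $H_U(F_0SG_1)=S^*H_U(F_0G_1)$, a recursion that relocates the unknown rather than resolving it; the base case is exactly the action on $W_1,\dots,W_m$, not the vanishing-at-$0$ part.

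The paper runs the argument in the opposite order and supplies the idea you are missing: two independent expressions for $U_i^s:=P_{\smdt}(U_i)$. Since $H_U$ commutes with $P_{\smdt}$, one has $U_i^s=H_UP_{\smdt}(e_i)$, which writes $[U_1^s,\dots,U_m^s]=[H_U(W_1),\dots,H_U(W_m)]\,[w_{ij}(0)]^t$. Independently, the nearly-invariant inclusion~\eqref{importantnew} together with Lemma~\ref{k_theta_cm} forces each $U_i^s$ into $\bigvee_jF_0S^*(\Theta e_j)$, giving $[U_1^s,\dots,U_m^s]=F_0[S^*\Theta e_1,\dots,S^*\Theta e_r]\,[c_{ij}]$. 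Equating and inverting $[w_{ij}(0)]^t$ yields $[H_U(W_1),\dots,H_U(W_m)]=sF_0(S^*\Theta)A$ with $A=\tfrac1s[c_{ij}]([w_{ij}(0)]^t)^{-1}$; a Gram-matrix computation of $\langle U_i^s,U_j^s\rangle$ both ways shows $A^*A=I$, forcing $r=m$ and $A$ unitary. Only then is the formula extended: for bounded $G\in\kcl_\Theta$ one computes $H_U(F_0G)=P_m(U\bar F_0\bar G)=P_m\big([H_U(W_1),\dots,H_U(W_m)]\bar G\big)$ directly (using that $\bar g_j(I-P_m)(U\bar W_j)$ is anti-analytic), substitutes the column identity, and takes limits. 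Your abstract $V$ cannot be pinned down without this concrete column computation.
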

\begin{proof}
	Let  $W_i = [w_{1i},w_{2i},\ldots ,w_{mi}]^t $,  $i\in \{1,2,\ldots ,m \}$.
	Since, $\{e_1,e_2,\ldots ,e_m \}$ is the standard orthonormal basis of $\C^m$, then for each $j\in \{1,2,\ldots ,m \}$,
	\begin{align*}
		P_{\smdt}(e_j) & = \langle e_{j}, W_1\rangle W_1 + \langle e_{j}, W_2\rangle W_2+\cdots + \langle e_{j}, W_m\rangle W_m \\
		& = \overline{w_{j1}(0)} W_1 +\overline{w_{j2}(0)} W_2 +\cdots +\overline{w_{jm}(0)} W_m.
	\end{align*}
	Note that $H_U$ commutes with $H_U^2$, hence with the orthogonal projection onto $\smdt$. For $1 \leq i \leq m $, let $U_i^s$ denote the orthogonal projection of $U_i$ onto $\smdt$. Then we have,
	\begin{align}\label{cond4.1}
		\nonumber U_i^s = P_{\smdt}(U_i)  & = P_{\smdt}H_U(e_i)\\
		\nonumber & = H_U P_{\smdt}(e_i) \\
		\nonumber & = H_U[\overline{w_{i1}(0)} W_1 +\overline{w_{i2}(0)} W_2 +\cdots +\overline{w_{im}(0)} W_m] \\
		& = w_{i1}(0) H_U(W_1) +w_{i2}(0) H_U (W_2) +\cdots + w_{im}(0)H_U (W_m). 
	\end{align}
	Therefore we have
	\begin{equation}\label{equ4.1}
		[U_1^s , U_2^s, \ldots ,U_m^s]_{m\times m} = [H_U(W_1),H_U(W_2),\ldots ,H_U(W_m)]_{m\times m} [w_{ij}(0)]_{m\times m}^t.
	\end{equation}
	
	Now by Theorem \ref{nearly_inv_thm}, $~\smdt = F_0(\hdcm \ominus \Phi\hdcr)$ for some inner function $ \Theta \in H^\infty(\D,$ $\mathcal{L}(\mathbb{C}^{r},\mathbb{C}^m))$, with $\Theta (0) = 0 $ and $F_0=[W_1,W_2,\ldots ,W_m]_{m\times m}$.
	
	It is easy to check that
	\begin{enumerate}[(i)]
		\item $F_0\kcl_{\Theta} \cap {\C^m}^\perp = F_0\kcl_{\Theta} \cap {\wcl}^\perp $, and
		\item  $F_0\kcl_{\Theta} \cap \{U_1,U_2, \ldots ,U_m \}^\perp =	F_0\kcl_{\Theta} \cap \{U_1^s,U_2^s, \ldots ,U_m^s \}^\perp .$
	\end{enumerate}
	
	Due to the nearly $S^*$-invariant property of $\smdt = F_0\kcl_{\Theta}$ and using above two identity  along with \eqref{importantnew} we also have,
	\begin{equation}\label{equ4.2}
		S^*\left( F_0\kcl_{\Theta} \cap {\wcl}^\perp\right) \subseteq F_0\kcl_{\Theta} \cap \{U_1^s,U_2^s, \ldots ,U_m^s \}^\perp .
	\end{equation}
	By Lemma \ref{k_theta_cm},
	\begin{align}
		\nonumber S^*\left( F_0\kcl_{\Theta} \cap {\wcl}^\perp\right) & = S^*\left( F_0(\kcl_{\Theta} \cap {\C^m}^\perp)\right)\\
		\nonumber& = F_0S^*\left(\kcl_{\Theta} \cap {\C^m}^\perp\right)\\
		\nonumber& = F_0\left(\kcl_{\Theta} \cap \left\{\bigvee_{i=1}^r S^*(\Theta e_i) \right\}^{\perp}\right)\\
		& = F_0 \kcl_{\Theta} \cap \left\{\bigvee_{i=1}^r F_0S^*(\Theta e_i) \right\}^{\perp}.\label{equ4.3}
	\end{align}
	Combining \ref{equ4.2} and \ref{equ4.3}, we get 
	\begin{align}\label{equ4.4}
		F_0 \kcl_{\Theta} \cap \left\{\bigvee_{i=1}^r F_0S^*(\Theta e_i) \right\}^{\perp}\subseteq F_0\kcl_{\Theta} \cap \{U_1^s,U_2^s, \ldots ,U_m^s \}^\perp .
	\end{align}
	Next we show that $ \operatorname{span}\{U_1^s,U_2^s,\ldots ,U_m^s\} \subseteq \bigvee\limits_{i=1}^r \{F_0S^*(\Theta e_i)\}\subseteq F_0\kcl_\Theta $.  Let $F\in  \operatorname{span}\{U_1^s,U_2^s,\ldots ,U_m^s\}$ Therefore $F$ can be written as $$F=F_1\oplus F_2,$$ where $F_1 \in  \bigvee_{i=1}^r {F_0S^*(\Theta e_i)}$ and $F_2\in  F_0 \kcl_{\Theta} \cap \left\{\bigvee_{i=1}^r F_0S^*(\Theta e_i) \right\}^{\perp} $. So, we have
	$$\langle F,F_2\rangle =\langle F_1,F_2 \rangle + \langle F_2,F_2\rangle,$$
	which due to \eqref{equ4.4}, gives $\norm{F_2}^2=0$ that is, $F_2 =0$. In other words, $F=F_1 \in\bigvee_{i=1}^r {F_0S^*(\Theta e_i)} $ implies
	\begin{align}\label{equ4.5}
		\operatorname{span}\{U_1^s,U_2^s,\ldots ,U_m^s\} \subseteq \bigvee_{i=1}^r {F_0S^*(\Theta e_i)} .
	\end{align} 
	Hence, from the above set inclusion \eqref{equ4.5} we have
	\begin{equation}\label{equ4.7i}
		\begin{split}
			U_1^s &= c_{11}F_0S^*\Theta e_1 + c_{21}F_0S^*\Theta e_2 + \cdots + c_{r1}F_0S^*\Theta e_r\\
			U_{ 2}^s &= c_{12}F_0S^*\Theta e_1 + c_{22}F_0S^*\Theta e_2 + \cdots + c_{r2}F_0S^*\Theta e_r\\
			\vdots&  ~\hspace{4cm} \vdots \\
			U_{ m}^s &= c_{1m}F_0S^*\Theta e_1 + c_{2m}F_0S^*\Theta e_2 + \cdots + c_{rm}F_0S^*\Theta e_r,
		\end{split}
	\end{equation}
	where $c_{ij}, 1\leq i\leq r, 1\leq j\leq m$, are some scalars. In matrix notation,
	\begin{align}\label{new1}
		\nonumber[U_1^s,U_2^s,\ldots ,U_m^s]_{m\times m} & = [F_0S^*\Theta e_1, F_0S^*\Theta e_2, \ldots  ,F_0S^*\Theta e_r]_{m\times r}\,[c_{ij}]_{r\times m}\\
		&= F_0[S^*\Theta e_1, S^*\Theta e_2, \ldots  ,S^*\Theta e_r]_{m\times r}\,[c_{ij}]_{r\times m}.
	\end{align}
	From \eqref{equ4.1} and \eqref{new1}, it follows that
	\begin{align}\label{equ4.7}
		\nonumber&[H_U(W_1),H_U(W_2),\ldots ,H_U(W_m)]_{m\times m}  [w_{ij}(0)]_{m\times m}^{t}\\ & = F_0[S^*\Theta e_1, S^*\Theta e_2, \ldots  ,S^*\Theta e_r]_{m\times r}[c_{ij}]_{r\times m}.
	\end{align}
	In the proof of Theorem \ref{nearly_inv_thm}, we have seen that $\{W_1(0), W_2(0),\ldots ,W_m(0)\}$ is linearly independent in $\C^m$, so $[w_{ij}(0)]_{m\times m}$ is an invertible matrix and so is \\$ [w_{ij}(0)]_{m\times m}^t$. Therefore the equation \eqref{equ4.7} can be rewritten as,
	\begin{align}\label{equ4.9}
		\nonumber&[H_U(W_1),H_U(W_2),\ldots ,H_U(W_m)]\\
		 & = F_0[S^*\Theta e_1, S^*\Theta e_2, \ldots  ,S^*\Theta e_r][c_{ij}]\left([w_{ij}(0)]^t\right)^{-1}.
	\end{align}
	Now, for any $i,j \in \{1,2,\ldots ,m\}$, using \eqref{cond4.1} we have,
	\begin{align}\label{A1}
		\nonumber&\norm{U_i^s}^2\\
		\nonumber& = \left \langle U_i^s ,U_i^s\right \rangle \\
		\nonumber& = \langle w_{i1}(0) H_U(W_1) +\cdots + w_{im}(0)H_U (W_m),~ w_{i1}(0) H_U(W_1) +\\
		\nonumber&\hspace{2.5in}\cdots + w_{im}(0)H_U (W_m) \rangle \\
		\nonumber& = |{w_{i1}(0)}|^2 \langle  H_U^2(W_1),W_1  \rangle  +|{w_{i2}(0)}|^2 \langle  H_U^2(W_2),W_2  \rangle +\\
		\nonumber&\hspace{2in} \cdots + |{w_{im}(0)}|^2 \langle  H_U^2(W_m),W_m  \rangle \\
		& = s^2 \left\{|{w_{i1}(0)}|^2 +|{w_{i2}(0)}|^2+ \cdots +|{w_{im}(0)}|^2\right\}.
	\end{align}
	Similarly for $i\neq j$,
	\begin{align}\label{A2}
		\left \langle U_i^s ,U_j^s\right \rangle & = s^2 \left\{{w_{i1}(0)}\overline{w_{j1}(0)} +{w_{i2}(0)}\overline{w_{j2}(0)}+ \cdots +{w_{im}(0)}\overline{w_{jm}(0)}\right\}.	
	\end{align}
	Also from \eqref{equ4.7i} we can derive in a similar manner,
	\begin{align}\label{A3}
		\norm{U_i^s}^2 & = |c_{1i}|^2 + |c_{2i}|^2 + \cdots + |c_{ri}|^2
	\end{align}
	and for $i\neq j$,
	\begin{align}\label{A4}
		\left \langle U_i^s ,U_j^s\right \rangle & = c_{1i}\overline{c_{1j}} +	c_{2i}\overline{c_{2j}} + \cdots + c_{ri}\overline{c_{rj}}.
	\end{align}
	Combining \eqref{A1}, \eqref{A2}, \eqref{A3} and \eqref{A4}, we get
	\begin{align*}
		[c_{ij}]^*[c_{ij}] 
		=s^2\left([w_{ij}(0)]^t\right)^* [w_{ij}(0)]^t,
	\end{align*}
	which gives
	\begin{equation}\label{equ4.14}
		\Big(\left([w_{ij}(0)]^t\right)^{-1}\Big)^*\,[c_{ij}]^*\,[c_{ij}]\,\left([w_{ij}(0)]^t\right)^{-1}= s^2I.
	\end{equation}
	Let, $A=\frac{1}{s}[c_{ij}]\left([w_{ij}(0)]^t\right)^{-1}$, then by \eqref{equ4.14} we have $A^*A =I$, that is $A$ is an isometry from $\C^m$ to $\C^r$. Since by hypothesis, we have $r\leq m$ then we must have $r=m$. 
	
	So, $A$ is an unitary matrix from $\C^m$ to $\C^m$ and  \eqref{equ4.9} can be re written as 
	\begin{equation}
		[H_U(W_1),H_U(W_2),\ldots ,H_U(W_m)]  = s\,F_0\,[S^*\Theta e_1, S^*\Theta e_2, \ldots  ,S^*\Theta e_r]\,A.
	\end{equation}
	Next we derive the action of $H_U$ on dense subspace of  $F_0\kcl_\Theta$. So, consider any $G\equiv [g_1,g_2,\ldots ,g_m]^t\in \kcl_\Theta \cap H^\infty(\T,\mathcal{L}(\C,\C^m)$, then
	\begin{align*}
		H_U(F_0G) & = P_{m}\left(U\bar{F_0}\bar{G}\right) \\
		& = P_{m}\left\{U(\bar{g_1}\bar{W_1}+\bar{g_2}\bar{W_2} + \cdots +\bar{g_m}\bar{W_m})\right\} \\
		& = P_{m} \left\{ H_U(W_1)\bar{g_1} +H_U(W_2)\bar{g_2} +\cdots +H_U(W_m)\bar{g_m} \right\}\\
		& = P_{m} \left\{ [H_U(W_1),H_U(W_2),\cdots ,H_U(W_m)]\bar{G}\right\}\\
		& = sP_{m}\left\{ F_0[S^*\Theta e_1, S^*\Theta e_2, \cdots  ,S^*\Theta e_r]A\bar{G} \right\}.
	\end{align*}
	Therefore, by standard limiting argument, we get the explicit action of $H_U$ on $\smdt = F_0\kcl_\Theta$ as follows:
	$$	H_U(F_0G) = s\, \mathbb{P}_m\Big(F_0[S^*\Theta e_1, S^*\Theta e_2, \ldots  ,S^*\Theta e_r]A\bar{G}\Big), \hspace{0.5cm} G\in \kcl_{\Theta}.$$
\end{proof}
\begin{rmrk}
	In the above Theorem \ref{action_thm}, if $\Theta A$ is symmetric, then due to Lemma \ref{theta_a_sym}, the action  of $H_U$ (see \eqref{formula4.1}) on $\smdt$ is reduced to
	\begin{align}\label{new2}
		H_U(F_0G) = s\,F_0 \left[ S^*\Theta (A\bar{G}) \right], \hspace{0.5cm} G\in \kcl_{\Theta}.
	\end{align} If $m=1$, then $\Theta A$ is automatically symmetric. Therefore by \eqref{new2}, one can get back the action of the Hankel operator on the Schmidt subspace obtained in \cite{GaPu20,GaPu21}. 
\end{rmrk}

Next we see some useful observations concerning Theorem \ref{action_thm}.
\begin{rmrk}
	For the inner function $\Theta$ obtained in Theorem \ref{action_thm}, $S^*\Theta A $ is symmetric at $0$.
	\begin{proof}
		If $S^*\Theta A = [\theta_{ij}]_{m\times m}$, then our claim is to show $\theta_{ij}(0) = \theta_{ji}(0)$. Recall that $S^*\Theta Ae_i \in K_{\Theta}$. Now,
		\begin{align*}
			\theta_{ij}(0) &= \langle S^*\Theta A(e_i),e_j \rangle 
			= \langle F_{0} S^*\Theta A(e_i),F_{0}e_j \rangle 
			= \frac{1}{s}  \langle H_{U}(W_{i}) , W_{j} \rangle \\
			& = \frac{1}{s} \langle H_{U}(W_{j}) , W_{i} \rangle 
			= \langle F_{0} S^*\Theta A(e_j),F_{0}e_i \rangle 
			= \langle S^*\Theta A(e_j),e_i \rangle 
			= \theta_{ji}(0).
		\end{align*}
		Therefore we  conclude that $S^*\Theta A (0)$ is symmetric.	
	\end{proof}
\end{rmrk}

\begin{rmrk}
	It is surprising to note that, in Theorem \ref{action_thm},  the inner function $\Theta$ is {\bf unitary} almost everywhere on $\T$ which is not the case in general.
\end{rmrk}
As promised earlier, we conclude the section with an example of a class of non-trivial Schmidt subspaces satisfying the Hypothesis~ \ref{dim} in $H^2_{\C^2}(\D)$.
\begin{xmpl}\label{lastexample}
	Consider two non-constant inner functions $\phi, \psi \in \mathcal{H}^\infty (\D,\C)$ with $\phi \neq \psi$. For $\theta =\phi + \psi$ and $\gamma =\phi -\psi$, let
	\begin{equation*}
		U=
		\begin{bmatrix}  
			\theta & \gamma \\[1pt]
			\gamma & \theta
		\end{bmatrix} 
		\hspace{0.5cm} \textit{then,} \hspace{0.5cm}
		H_U =
		\begin{bmatrix}  
			H_{\theta} & H_{\gamma} \\[1pt]
			H_{\gamma} & H_{\theta}
		\end{bmatrix}. 
	\end{equation*}
	Then it is easy to see that, $$\smdt \ominus (\smdt \cap zH^2_{C^2}(\D)) = \bigvee \{\frac{s}{4}e_1,\frac{s}{4}e_2\},$$ 
	and hence $\text{dim}(\smdt \ominus (\smdt \cap zH^2_{C^2}(\D)))= 2$. The action of $H_U$ on $\smdt$ can be determined by the formula \eqref{formula4.1} given in Theorem \ref{action_thm}. It is remarkable to mention that, the scalar formula will not help to find the action of $H_U$ on $\smdt\subseteq\hdcm$.
\end{xmpl} 

\textit{Open question:} If $\text{dim}(\smdt \ominus (\smdt \cap z\hdcm))<m$, then it is still unknown to us what will be the formula for the action of $H_U$ on $\smdt$.

\section*{Acknowledgments}
		{\it The authors are extremely grateful to Prof. Alexander Pushnitski for giving his valuable comments on this work.  A. Chattopadhyay is supported by the Core Research Grant (CRG), File No: CRG/2023/004826, by the Science and Engineering Research Board (SERB), Department of Science \& Technology (DST), Government of India. S. Das acknowledges financial support from the Indian Statistical Institute Bangalore, India. C. Pradhan acknowledges support from JCB/2021/000041 as well as IoE post-doctoral fellowship from the Indian Institute of Science Bangalore, India. We sincerely thank the anonymous referee for several valuable suggestions.}

\end{document}